      \theoremstyle{plain}
      \newtheorem{theorem}{Theorem}[section]
      \newtheorem*{theorem*}{Theorem}
      \newtheorem{lemma}[theorem]{Lemma}
      \newtheorem{proposition}[theorem]{Proposition}
      \newtheorem*{aproposition}{Proposition \ref{propHomotEquivSufConditions}}
      \newtheorem*{bproposition}{Proposition \ref{propContracibilitySufConditions}}
      \newtheorem*{cproposition}{Proposition \ref{propositionApGContractibilityLowSteps}}
      \newtheorem*{atheorem}{Theorem \ref{ultimothm}}
      \theoremstyle{definition}
	  \newtheorem{example}[theorem]{Example}
      \newtheorem{definition}[theorem]{Definition}
     \theoremstyle{remark}
      \newtheorem{remark}[theorem]{Remark}
 \newcommand\ZZ{{\mathbb{Z}}}
 \newcommand\NN{{\mathbb{N}}}
 \renewcommand\SS{{\mathbb{S}}}
 \renewcommand\AA{{\mathbb{A}}}
 \def\A{{\mathcal A}}
 \def\K{{\mathcal K}}
 \def\M{{\mathcal M}}
 \def\O{{\mathcal O}}
 \def\S{{\mathcal S}}
 \def\i{{\mathfrak{i}}}
 \def\s{{\mathfrak{s}}}
 \def\Id{{\text{Id}}}
 \def\fix{{\text{Fix}}}
\newcommand\gen[1]{\left\langle#1\right\rangle}
      \def\@setcopyright{}
      \def\serieslogo@{}
\begin{document}

\title [The homotopy types of the posets of $p$-subgroups]{The homotopy types of the posets of $p$-subgroups of a finite group}
   \author{El\'ias Gabriel Minian}
   \author{Kevin Iv\'an Piterman}
   \address{Departamento  de Matem\'atica \\IMAS-CONICET\\
 FCEyN, Universidad de Buenos Aires. Buenos Aires, Argentina.}
\email{gminian@dm.uba.ar ; kpiterman@dm.uba.ar}

\thanks{Partially supported by grants ANPCyT PICT-2011-0812, CONICET PIP 112-201101-00746 and UBACyT 20020130100369.}

   \begin{abstract}
We study the homotopy properties of the posets of $p$-subgroups $\S_p(G)$ and $\A_p(G)$ of a finite group $G$, viewed as finite topological spaces. We answer a question raised by R.E. Stong in 1984 about the relationship between the contractibility of the finite space $\A_p(G)$ and that of $\S_p(G)$ negatively, and describe the contractibility of $\A_p(G)$ in terms of algebraic properties of the group $G$. 
   \end{abstract}

\subjclass[2010]{20J99, 20J05, 20D30, 05E18, 06A11.}

\keywords{$p$-subgroups, posets, finite topological spaces.}

\maketitle

\section{Introduction}

The poset $\S_p(G)$ of non-trivial $p$-subgroups of a finite group $G$ was introduced by K. Brown in the seventies \cite{Bro75}. Brown observed that the topology of the simplicial complex associated to this poset, which we denote by $\K(\S_p(G))$, is related to the algebraic properties of $G$, and proved the Homological Sylow Theorem
 $$\chi(\K(\S_p(G)))\equiv 1\mod |G|_p,$$ where $\chi(\K(\S_p(G)))$ denotes the Euler characteristic of the complex and  $|G|_p$ is the greatest power of $p$ that divides the order of $G$.

The study of the topological properties of $\S_p(G)$ was continued by D. Quillen in his seminal paper \cite{Qui78}. Quillen  investigated the homotopy properties of $\K(\S_p(G))$ by comparing it with the complex associated to the subposet $\A_p(G)$ of non-trivial elementary abelian $p$-subgroups of $G$. He proved that $\K(\S_p(G))$ and $\K(\A_p(G))$ are homotopy equivalent and that these polyhedra are contractible if $G$ has a non-trivial normal $p$-subgroup. Quillen conjectured that the converse should hold \cite[Conjecture 2.9]{Qui78}: if $\K(\S_p(G))$ is contractible then $\O_p(G)\neq 1$. Here $\O_p(G)$ denotes the maximal normal $p$-subgroup of $G$. The conjecture remains unproven but there have been remarkable progresses. In the nineties M. Aschbacher and S. Smith obtained the most significant partial confirmation of Quillen's conjecture so far \cite{AS93}.

The works of Brown and Quillen on the topology of the $p$-subgroup complexes have been pursued by many mathematicians, who related the topological properties of the complexes and the combinatorics of the posets with the algebraic properties of the group (see \cite{AK,Asc93,Bouc,ADR,HI,Ks1,Ks2,Seg94,Sym,TW91}). For example, in \cite{Asc93,Ks1,Ks2} the authors investigated the fundamental group of these complexes and in \cite{HI} T. Hawkes and I.M. Isaacs proved that if $G$ is $p$-solvable and has an abelian Sylow $p$-subgroup, then $\chi(\K(\S_p(G)))= 1$ if and only if $\O_p(G)\neq 1$. We refer the reader to S. Smith's book \cite{Smi11} for more details on subgroup complexes and the development of these results along the last decades.

In all the articles that we mentioned above, the authors handled the posets $\S_p(G)$ and $\A_p(G)$ topologically by means of their classifying spaces (or order complexes) $\K(\S_p(G))$ and $\K(\A_p(G))$. In 1984 R.E. Stong adopted an alternative point of view: he treated $\S_p(G)$ and $\A_p(G)$ as finite topological spaces \cite{Sto84}. Any finite poset has an intrinsic topology and in \cite{Sto84} Stong used  results on the homotopy theory of finite spaces that he obtained previously in \cite{Sto66} and results of McCord \cite{MC66}, in order to relate the (intrinsic) topology of the posets $\S_p(G)$ and $\A_p(G)$ with the algebraic properties of the group. At that time it was already known that for any finite poset $X$ (viewed as a finite space) there exists a natural weak equivalence $\mu_X:\K(X)\to X$. In particular, the posets $\A_p(G)$ and $\S_p(G)$ are weak equivalent (viewed as finite spaces) since their order complexes are homotopy equivalent. But the notion of homotopy equivalence and contractibility in the context of finite topological spaces is strictly stronger than those in the context of polyhedra. This is because the classical theorem of J.H.C. Whitehead is not longer true in the context of finite spaces (see \cite{Bar11a,BM08a}).

Stong showed that in general $\S_p(G)$ and $\A_p(G)$ are not homotopy equivalent as finite spaces. Concretely, he proved that for $G=\SS_5$, the symmetric group on five letters, and $p=2$, the finite spaces $\A_p(G)$ and $\S_p(G)$ do not have the same homotopy type. He also proved that $\S_p(G)$ is a finite contractible space if and only if $\O_p(G)\neq 1$. In this way, Quillen's Conjecture can be reformulated by saying that if $\S_p(G)$ is homotopically trivial (i.e. if $\K(\S_p(G))$ is contractible) then it is contractible (as a finite space). On the other hand, the contractibilty of the poset $\A_p(G)$ implies that of $\S_p(G)$. In a note at the end of \cite{Sto84} he left open the question whether the converse holds.

The first goal of this article is to answer Stong's question mentioned above. In this direction, we prove first the following two results.

\begin{aproposition}
	In the following cases $\S_p(G)$ and $\A_p(G)$ have the same homotopy type:
	\begin{enumerate}
		\item  $\Omega_1(G)$ is abelian for each $P\in  Syl_p(G)$, 
		\item $\A_p(G)$ is discrete, 
		\item $G=D_n$ (the dihedral group),
		\item $|G| = p^\alpha q$.
	\end{enumerate}
	Moreover, $\A_p(G)\subset \S_p(G)$ is a strong deformation retract if and only if it is a retract, and this happens if and only if condition (1) holds.
\end{aproposition}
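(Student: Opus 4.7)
The cleanest part of the proposition is the ``Moreover'' equivalence, and I would start there, since it also supplies the proof for case (1). For the nontrivial direction, suppose $r \colon \S_p(G) \to \A_p(G)$ is an order-preserving retraction. Fix $P \in Syl_p(G)$ and let $Z_1, \dots, Z_k$ be its subgroups of order $p$. Each $Z_i$ lies in $\A_p(G)$ and hence is fixed by $r$; each $Z_i$ also sits inside $\Omega_1(P) \in \S_p(G)$, so order-preservation yields $Z_i = r(Z_i) \leq r(\Omega_1(P))$ as subgroups of $G$. Therefore $\Omega_1(P) = \langle Z_1, \dots, Z_k \rangle \leq r(\Omega_1(P))$, and since $r(\Omega_1(P)) \in \A_p(G)$ is elementary abelian, so is $\Omega_1(P)$, which gives (1). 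Conversely, assuming (1), the assignment $r(Q) := \Omega_1(Q)$ is well-defined into $\A_p(G)$ (for $Q$ inside a Sylow $P$, $\Omega_1(Q) \leq \Omega_1(P)$ is elementary abelian), is order-preserving, fixes $\A_p(G)$ pointwise, and satisfies $r(Q) \leq Q$. Stong's fact that $f \leq g$ pointwise implies $f \simeq g$ for maps of finite spaces then gives $r \simeq \mathrm{id}$, so $\A_p(G)$ is a strong deformation retract of $\S_p(G)$.

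The first three enumerated cases reduce either to (1) or to a direct contractibility computation. Case (1) is immediate. For case (2), $\A_p(G)$ being discrete precludes any elementary abelian of rank $\geq 2$ (which would strictly contain several rank-$1$ subgroups), so $G$ has $p$-rank $1$; the classical structure theorem then forces each Sylow $p$-subgroup to be cyclic (for odd $p$) or cyclic or generalized quaternion (for $p = 2$), and in all cases $\Omega_1(P)$ is cyclic of order $p$, so (1) applies. For case (3), when $p$ is odd or $p = 2$ with $n$ odd the Sylow is cyclic and again (1) holds; when $p = 2$ and $n$ is even the central involution $z := r^{n/2}$ generates a central elementary abelian subgroup $\langle z \rangle$, and the maps $Q \mapsto Q\langle z \rangle$ on $\S_2(G)$ and $A \mapsto A\langle z \rangle$ on $\A_2(G)$ are strong deformation retractions onto subposets with minimum $\langle z \rangle$, so both spaces are contractible.

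Case (4), $|G| = p^\alpha q$, is the main obstacle, and I would split on $\O_p(G)$. If $\O_p(G) = 1$, Burnside's $p^\alpha q^\beta$ theorem gives $G$ solvable with $F(G) \neq 1$, forcing $\O_q(G) = Q$ to be the unique Sylow $q$-subgroup; Schur--Zassenhaus yields $G = Q \rtimes P$. A direct calculation in the semidirect product shows $P \cap P^{q_0} = C_P(q_0) = \ker(P \to \mathrm{Aut}(Q))$ for every $q_0 \in Q \setminus \{1\}$, and this kernel is a normal $p$-subgroup, hence trivial. Sylow $p$-subgroups therefore intersect pairwise trivially, and both $\S_p(G) = \bigsqcup_P \S_p(P)$ and $\A_p(G) = \bigsqcup_P \A_p(P)$ decompose as disjoint unions in which each summand is contractible ($\S_p(P)$ has maximum $P$, and $\A_p(P)$ deformation retracts via $A \mapsto A\,\Omega_1(Z(P))$ onto the subposet with minimum $\Omega_1(Z(P))$). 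Both spaces are thus homotopy equivalent to a discrete space on $n_p$ points.

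The remaining subcase $\O_p(G) \neq 1$ is the hardest: Stong's theorem gives $\S_p(G)$ contractible and we must show the same for $\A_p(G)$ as a finite space. When $\O_p(G) = P$ is a full Sylow, every $p$-subgroup lives in $P$, $\A_p(G) = \A_p(P)$, and the central-subgroup argument $A \mapsto A\,\Omega_1(Z(P))$ applies verbatim. The genuinely delicate situation is $1 < \O_p(G) < P$: here $V := \Omega_1(Z(\O_p(G)))$ is a nontrivial normal elementary abelian $p$-subgroup but need not be central in $G$, so the naive $A \mapsto AV$ need not land in $\A_p(G)$ (Quillen's zig-zag argument proves the polyhedron $\K(\A_p(G))$ contractible but does not upgrade to finite-space contractibility). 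The plan is to exploit the rigid constraint that $G/C_G(V) \hookrightarrow \mathrm{GL}(V)$ is a $\{p,q\}$-group whose order divides $p^\alpha q$, combined with a beat-point and weak-point analysis (modeled on the explicit collapsing one can carry out on $\A_2(S_4)$), in order to reduce $\A_p(G)$ to a contractible subposet on which the $AV$ construction is well-defined.
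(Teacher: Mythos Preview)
Your treatment of the ``Moreover'' clause and of cases (1)--(3) is correct and essentially matches the paper. Your case (4) with $\O_p(G)=1$ is also correct, though it differs from the paper's split: the paper divides instead according to whether two Sylow $p$-subgroups intersect nontrivially, and when they do invokes a result of Isaacs that then $\O_p(G)=P\cap P'$ for \emph{every} pair of distinct Sylows. Your route via Burnside, $F(G)\neq 1$, and the kernel $C_P(Q)\trianglelefteq G$ is a valid alternative for the trivial-intersection side.

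The genuine gap is the subcase $1<\O_p(G)<P$. You yourself flag that Quillen's zig-zag only gives contractibility of $\K(\A_p(G))$, and then you offer only a ``plan'' (a beat-point/weak-point analysis guided by $G/C_G(V)\hookrightarrow\mathrm{GL}(V)$) rather than an argument. That is exactly the place where the work lies, and the paper's proof is quite different from your outline. The paper first reduces to $G=\Omega_1(G)$ (since $\A_p(G)=\A_p(\Omega_1(G))$ and $\S_p(\Omega_1(G))$ is a strong deformation retract of $\S_p(G)$). Assuming $\A_p(G)$ is not contractible, it shows $p\nmid |Z(G)|$ and that no Sylow $q$-subgroup is normal (otherwise a minimal normal $p$-subgroup would be central), so $\O_{p'}(G)=1$; the Hall--Higman lemma then gives $C_G(\O_p(G))\leq \O_p(G)$. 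The key consequence is $Z(P)\leq \O_p(G)$, hence $\Omega_1(Z(P))\leq \O_p(G)$ sits inside every maximal $p$-torus of $P$. Combined with $\O_p(G)=P\cap P'$ for distinct Sylows, this yields that for every $A$ the intersection $r(A)$ of all maximal $p$-tori above $A$ meets $\O_p(G)$ nontrivially. The fence
\[
A \leq r(A) \geq r(A)\cap \O_p(G)
\]
then exhibits $\A_p(\O_p(G))$ (which is contractible, being the poset for a $p$-group) as homotopy equivalent to $\A_p(G)$, a contradiction. This is the missing idea in your proposal; the vague $\mathrm{GL}(V)$/collapsing plan does not substitute for it.
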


Here $\Omega_1(G)$ denotes the subgroup generated by elements of order $p$, $|G|$ denotes the order of $G$ and $Syl_p(G)$ is the set of its Sylow $p$-subgroups.

\begin{bproposition}
	In any of the following cases, the contractibility of $\S_p(G)$ implies that of $\A_p(G)$:
	\begin{enumerate}
		\item All maximal $p$-tori are conjugated,
		\item $h(\A_p(G))\leq 1$,
		\item $|G|_p\leq p^3$.
	\end{enumerate}
\end{bproposition}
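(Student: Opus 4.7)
The plan rests on Stong's result (recalled in the introduction) that $\S_p(G)$ is contractible as a finite space if and only if $\O_p(G)\neq 1$. Under this hypothesis I single out $A := \Omega_1(Z(\O_p(G)))$, a nontrivial normal elementary abelian $p$-subgroup of $G$ which will serve as the candidate ``center'' to which $\A_p(G)$ collapses in each of the three scenarios.

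For case (1), I first verify that $A$ sits inside every maximal $p$-torus. Pick some maximal torus $E\supseteq A$; since $A$ is normal and every maximal torus, by hypothesis, has the form $gEg^{-1}$, one obtains $A = gAg^{-1}\leq gEg^{-1}$, so $A$ lies in all of them. Now given $B\in\A_p(G)$, choose a maximal torus $E'\supseteq B$: then $AB\leq E'$ is elementary abelian, so the assignment $f(B):=AB$ defines an order-preserving self-map of $\A_p(G)$ with $\Id\leq f$ and image inside $\A_p(G)_{\geq A}$. Standard finite-space theory then gives $f\simeq\Id$, and $\A_p(G)$ deformation retracts onto $\A_p(G)_{\geq A}$, which is contractible because it has minimum $A$.

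For case (2), the hypothesis $h(\A_p(G))\leq 1$ makes $K(\A_p(G))$ a one-dimensional simplicial complex. Contractibility of $\S_p(G)$ as a finite space implies contractibility of $K(\S_p(G))$ via the McCord map $\mu$, and Quillen's equivalence $K(\S_p(G))\simeq K(\A_p(G))$ transfers it; hence $K(\A_p(G))$ is a contractible graph, i.e.\ a tree. I would then prove a small general fact: any height-$1$ finite poset whose Hasse diagram is a tree is contractible as a finite space. Indeed, a leaf is either a minimal element with a unique upper neighbour (an up-beat point) or a maximal element with a unique lower neighbour (a down-beat point); removing it leaves a smaller poset whose Hasse diagram is still a tree, and induction on the number of vertices finishes.

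For case (3), I would split by the Sylow structure to fall into (1) or (2). If $|G|_p\leq p^2$, or if $|G|_p=p^3$ with a Sylow $P$ abelian, then $\Omega_1(P)$ is abelian for every Sylow, so Proposition~\ref{propHomotEquivSufConditions}(1) gives $\A_p(G)$ and $\S_p(G)$ the same finite-space homotopy type and contractibility passes to $\A_p(G)$. Otherwise $P$ is non-abelian of order $p^3$, and then no elementary abelian subgroup of $G$ can equal the Sylow, so every member of $\A_p(G)$ has rank at most $2$, giving $h(\A_p(G))\leq 1$ and reducing us to case (2). I anticipate the main subtlety in case (2): while the polyhedral tree structure of $K(\A_p(G))$ is easy to obtain, translating it into genuine beat-point collapses of the finite space $\A_p(G)$ relies crucially on the height-$1$ hypothesis, since a tree appearing in the order complex of a poset of larger height need not reflect any beat-point structure on the poset itself.
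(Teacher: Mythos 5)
Your proof is correct and follows essentially the same route as the paper: case (1) via the normal subgroup $\Omega_1(Z(\O_p(G)))$ lying in every maximal torus and the resulting two-step contraction, case (2) via the order complex being a contractible graph, and case (3) by reduction to the earlier cases. The only difference is that you spell out the tree/leaf-removal argument for case (2), which the paper asserts without proof, and you organize the $|G|_p=p^3$ case split by whether the Sylow subgroup is abelian rather than by the height of $\A_p(G)$; both are minor variations on the same argument.
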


Here $h(\A_p(G))$ denotes the height of the poset $\A_p(G)$.

In order to find a counterexample to Stong's question, we used these results to discard many (thousands) potential candidates: we applied the filters provided by Proposition \ref{propHomotEquivSufConditions} and Proposition \ref{propContracibilitySufConditions} to all subgroups of order less than or equal to $576$ in the ``Small Groups library'' of GAP.   In Example \ref{contraejemplo} we exhibit a group of order $576$ which answers Stong's question negatively. We also find the minimal counterexample of a group $G$ such that $\S_p(G)$ and $\A_p(G)$ are not homotopy equivalent. In Proposition \ref{minimocardinal} we prove that if $|G|<72$, $\S_p(G)\simeq \A_p(G)$ for every prime $p$.

Our counterexample to Stong's question shows that the contractibility of the finite space $\A_p(G)$ is strictly stronger than the contractibility of $\S_p(G)$. On the other hand, by Stong's results (see Proposition \ref{stongprop} below), the contractibility of $\S_p(G)$ is equivalent to $G$ having a nontrivial normal $p$-subgroup. The second goal of this article is to understand the contractibility of $\A_p(G)$ in purely algebraic terms. In the last section of the paper we prove the following two results. The first one provides a complete answer when the poset is contractible in few steps.

\begin{cproposition}
	The followings assertions hold:
	\begin{enumerate}
		\item $\A_p(G)$ is contractible in $0$ steps if and only if $G$ has only one subgroup of order $p$, i.e. $\Omega_1(G) \simeq \ZZ_p$,
		\item $\A_p(G)$ is contractible in $1$ step if and only if $\Omega_1(G)$ is abelian,
		\item $\A_p(G)$ is contractible in $2$ steps if and only if the intersection of all maximal $p$-tori is non-trivial, if and only if $p\mid |C_G(\Omega_1(G))|$,
		\item $\A_p(G)$ is contractible in $3$ steps if and only if there exists a $p$-torus subgroup of $G$ which intersects (in a non-trivial way) every non-trivial intersection of maximal $p$-tori.
	\end{enumerate}
\end{cproposition}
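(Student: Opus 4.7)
The plan is to verify each of the four clauses by matching the topological notion of contractibility in $k$ steps with the proposed algebraic condition, inducting on $k$ and using the standard fact that an order-preserving self-map $f:X\to X$ of a finite poset satisfying $f\geq \Id$ is homotopic to the identity, so that its image is a strong deformation retract of $X$.

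For (1), $\A_p(G)$ is a single point if and only if there is exactly one non-trivial elementary abelian $p$-subgroup of $G$. Since an elementary abelian $p$-group of rank at least $2$ contains several distinct subgroups of order $p$, this occurs iff that unique subgroup has order $p$ and contains every element of $G$ of order $p$, i.e.\ $\Omega_1(G)\simeq \ZZ_p$.

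For (2), I would observe that $\A_p(G)$ is contractible in one step exactly when it has a greatest or a least element; the existence of a least element is subsumed by case~(1), so the relevant condition is the existence of a maximum. If $\Omega_1(G)$ is abelian then, being generated by order-$p$ elements in an abelian group, it is itself elementary abelian, and since every elementary abelian $p$-subgroup of $G$ is generated by elements of order $p$, all of them sit inside $\Omega_1(G)$, making it the maximum of $\A_p(G)$. Conversely, any maximum of $\A_p(G)$ contains $\langle x\rangle$ for every $x\in G$ of order $p$, hence equals $\Omega_1(G)$ and forces the latter to be abelian.

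For (3), I first establish the algebraic equivalence. If $K$ denotes the intersection of all maximal $p$-tori and $x\in K$ has order $p$, then for any $y\in\Omega_1(G)$ of order $p$ a maximal torus containing $\langle y\rangle$ also contains $x$, so $x$ and $y$ commute; hence $x\in C_G(\Omega_1(G))$ and $p\mid|C_G(\Omega_1(G))|$. Conversely, if $x\in C_G(\Omega_1(G))$ has order $p$, then for every maximal $p$-torus $A$ the product $A\langle x\rangle$ is abelian of exponent $p$, hence elementary abelian, so maximality forces $x\in A$. For the topological direction, given such an $x$ I would consider the map $f:\A_p(G)\to\A_p(G)$, $f(A)=A\langle x\rangle$; it is order-preserving, satisfies $f\geq \Id$, and retracts $\A_p(G)$ onto $\A_p(G)_{\geq\langle x\rangle}$, whose minimum is $\langle x\rangle$, giving a two-step contraction. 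The converse direction follows by tracing a deformation retract witnessing two-step contractibility: it must provide a subposet with a maximum or minimum comparable with every maximal $p$-torus, and this element is then forced to lie in each of them.

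For (4), which I expect to be the main obstacle, the same scheme is pushed one level higher: a candidate $p$-torus $T$ intersecting non-trivially every non-empty intersection of maximal $p$-tori plays, on the algebraic side, the role that the centralising element of order $p$ played in (3), but the construction of the intermediate deformation retract is now subtler because $T$ need not centralise $\Omega_1(G)$, so products $A\cdot T$ need not be elementary abelian. The plan is to show that the hypothesis on $T$ is exactly what is needed so that, after one deformation retract, the resulting subposet is covered by maximal tori all meeting $T$ non-trivially, and on this subposet the argument of (3) applies inside each such intersection. The hard direction is the converse: extracting from an abstract three-step contraction of $\A_p(G)$ a single elementary abelian subgroup $T$ with the required intersection property; this will require a careful analysis of how the maximal $p$-tori, being the maximal elements of $\A_p(G)$, constrain the possible deformation retracts at the second stage and force the existence of a witness $T$ of the required form.
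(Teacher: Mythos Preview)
Your treatment of (1) and (2) and the ``if'' directions of (3) are fine, and the algebraic equivalence in (3) is argued correctly. The serious gap is in the ``only if'' directions of (3) and (4), where you wave your hands at exactly the point that needs an argument.

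The missing ingredient is a short lemma that the paper proves first: if $f,g:\A_p(G)\to\A_p(G)$ satisfy $\Id\geq f\leq g$, then already $\Id\leq g$. The reason is atomicity: every minimal element $\langle a\rangle$ is fixed by any map below the identity, so $\langle a\rangle=f(\langle a\rangle)\leq g(\langle a\rangle)\leq g(A)$ for every $a\in A$, whence $A\leq g(A)$. This lets you assume that any fence witnessing an $n$-step contraction begins $\Id\leq f_1\geq f_2\leq\cdots$. Without this normalisation you cannot, as you attempt in (3), simply say that a two-step retract ``must provide a subposet with a maximum or minimum comparable with every maximal $p$-torus''; a priori the fence could be $\Id\geq f\leq c_N$, and you have given no reason why that forces $N$ into every maximal torus.

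With the lemma in hand, the paper's argument for (3) and (4) is much shorter than you anticipate. Define $r(B)=\bigcap\{A\in M(\A_p(G)):A\geq B\}$. For (3), a normalised fence $\Id\leq f\geq c_N$ gives $f(A)=A$ for maximal $A$, hence $N\leq A$ for all maximal $A$; conversely $B\leq r(B)\geq N$ is the required two-step fence. For (4), a normalised fence $\Id\leq f\geq g\leq c_N$ satisfies $f(B)\leq r(B)$, so $r(B)\cap N\geq g(B)>1$, and the explicit fence $B\leq r(B)\geq r(B)\cap N\leq N$ both proves three-step contractibility from the intersection hypothesis and shows that $N$ itself is the witness torus in the converse. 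So the ``hard direction'' you flag in (4) is in fact a two-line computation once you have the lemma and the map $r$; your plan of passing to a deformation retract and then arguing ``inside each intersection'' is neither necessary nor obviously workable.
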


The second main result of the last section provides a partial answer in the case that $\A_p(G)$ is contractible in more than $3$ steps.

\begin{atheorem}
	The poset $\A_p(G)$ is contractible in $n$ steps if and only if one of the following holds:
	\begin{enumerate}
		\item $n=0$ and $\A_p(G) = \{*\}$,
		\item $n\geq 1$ is even and $\bigcap_{A\in \M_{n-1}} A >1$,
		\item $n\geq 1$ is odd and $\gen{A:A\in \M_{n-1}}$ is abelian.
	\end{enumerate}
\end{atheorem}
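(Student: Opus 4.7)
The plan is to prove the theorem by induction on $n$, using Proposition \ref{propositionApGContractibilityLowSteps} as the base case (covering $n \le 3$). The fundamental tool is Stong's characterization of homotopies between continuous maps of finite posets: two maps are homotopic if and only if they can be joined by a finite sequence of pairwise comparable maps. Thus $\A_p(G)$ is contractible in exactly $n$ steps if and only if $n$ is the minimum length of a zig-zag $\Id = f_0, f_1, \dots, f_n = \text{const}(x_0)$ of continuous self-maps in which consecutive maps are pointwise comparable, with strict alternation $\le, \ge, \le, \dots$ (or the dual pattern $\ge, \le, \ge, \dots$).

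For the forward direction, I would analyze such a minimal zig-zag by evaluating on maximal $p$-tori and exploiting the order-preservation of each $f_i$ to propagate constraints. When $n$ is even, the final comparison is $f_{n-1} \ge \text{const}(x_0)$ (up to the dual case); tracing $x_0$ backward through the zig-zag, continuity of the $f_i$ forces $x_0$ into every subgroup of a family indexed by $\M_{n-1}$, yielding $\bigcap_{A \in \M_{n-1}} A > 1$. When $n$ is odd, the dual argument shows that $x_0$ lies above each member of $\M_{n-1}$ in $\A_p(G)$; since every element of $\A_p(G)$ is elementary abelian, this forces $\langle A : A \in \M_{n-1}\rangle$ to be a $p$-torus, in particular abelian.

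For the backward direction I would build the zig-zag step by step, using the algebraic condition on $\M_{n-1}$ as the seed for the last link and the inductive hypothesis for the earlier links. For even $n$, pick $x_0$ of order $p$ in $\bigcap_{A \in \M_{n-1}} A$ and define the last map by intersecting with $\langle x_0\rangle$-related subgroups in a coherent way; for odd $n$, take $x_0 = \langle A : A \in \M_{n-1}\rangle$ and define the last map by joining with $x_0$. Order-preservation of these maps follows from functoriality of intersection (resp.\ join) with a fixed subgroup, and the inductive hypothesis applied to the derived family associated to $\M_{n-2}$ furnishes the initial portion of the zig-zag.

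The main obstacle will be verifying that the constructed intermediate maps are order-preserving \emph{and} take values in $\A_p(G)$, i.e., that the joins and intersections we form remain nontrivial elementary abelian $p$-subgroups. This is exactly where the recursive definition of $\M_k$ is engineered so that the algebraic conditions at successive steps match the geometric obstructions in Stong's zig-zag: the alternation between the "nontrivial intersection" and the "abelian join" conditions mirrors the $\le/\ge$ alternation, and the inductive leap amounts to showing that the failure to contract in $n-1$ steps is encoded precisely by the family $\M_{n-1}$, whose algebraic behavior is then the exact obstruction resolved at step $n$.
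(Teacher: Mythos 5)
Your high-level plan (analyze fences of self-maps, induct on $n$) points in the right direction, but the central step of the forward implication is asserted rather than proved, and it is exactly where the work lies. The family $\M_{n-1}$ is defined through the derived sequence $X\supseteq \i(X)\supseteq \s\i(X)\supseteq\ldots$ of the $\i$/$\s$ constructions, so any proof must connect an arbitrary length-$n$ fence $\Id=f_0,f_1,\ldots,f_n=c_{x_0}$ to that sequence; your phrase ``tracing $x_0$ backward through the zig-zag \ldots forces $x_0$ into every subgroup of a family indexed by $\M_{n-1}$'' skips this entirely. The paper supplies the missing mechanism in three pieces: (a) because $\A_p(G)$ is an atomic reduced lattice, any fence from the identity can be normalized to begin with $\Id\leq f_1$ and to satisfy $f_{2k}=f_{2k-1}\wedge f_{2k+1}$ and $f_{2k+1}=f_{2k}\vee f_{2k+2}$ (Lemma \ref{lemmaHomotopyCRL} and Proposition \ref{propFenceAtomic}) --- without this normalization the parity bookkeeping behind cases (2) and (3) is ambiguous, since a fence beginning with $\geq$ would flip it; (b) since $\Id\leq f_1$ forces $f_1\leq ir$, where $r(x)=\bigwedge_{y\in M(x)}y$ is the retraction onto $\i(X)$, the whole fence compresses into $\i(X)$ with one fewer step, and iterating dually with $\s$ yields $X\sim_n *$ if and only if $X_n=*$ (Theorem \ref{theoremStepContractibilty}); (c) only then does $X_n=*$ translate into ``$X_{n-1}$ has a maximum (resp.\ minimum)'', which is the algebraic statement that $\gen{A:A\in\M_{n-1}}$ is abelian (resp.\ that $\bigcap_{A\in\M_{n-1}}A>1$). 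Your proposed use of ``the derived family associated to $\M_{n-2}$'' is in effect step (b), but you give no argument for why the fence descends to the derived poset, and that descent is the crux.

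Two secondary points. First, ``contractible in $n$ steps'' here means a fence of length $n$ exists, not that $n$ is minimal; your reduction to ``the minimum length of a strictly alternating zig-zag'' both overstates what must be shown and understates what must be justified (strict alternation and the choice of starting direction are consequences of atomicity, not of minimality). Second, the converse direction needs none of the explicit map-building you describe: the algebraic condition says precisely that $X_{n-1}$ has a maximum or a minimum, hence $X_{n-1}\sim_1 *$, and since each $X_{i+1}$ is obtained from $X_i$ by removing beat points of a single kind, Theorem \ref{theoremStepsChangeBeatPoints} (or Theorem \ref{theoremStepContractibilty}(3)) gives $\A_p(G)\sim_n *$ at once. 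Your base case via Proposition \ref{propositionApGContractibilityLowSteps} is fine, but as written the inductive step does not go through.
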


All groups, posets and simplicial complexes in this paper are assumed to be finite.

\section{Preliminaries on finite spaces and R.E. Stong's approach}

We recall first some basic facts on the homotopy theory of finite topological spaces. For more details, we refer the reader to \cite{Bar11a, BM08a, MC66, Sto66}.

The standard way to study a poset $X$ topologically is by means of its order complex $\K(X)$ which is the simplicial complex of non-empty chains of $X$. However any finite poset $X$ has also an intrinsic topology where the open sets are its down-sets (recall that a down-set is a subset $U$ with the property that, if $x\in U$ and $y\leq x$ then $y\in U$). It is easy to see that a map $f:X\to Y$ between posets is order-preserving if and only if it is continuous (with the intrinsic topology), and that two continuous maps $f,g:X\to Y$ are homotopic if and only if there exists a \textit{fence} of maps $f_0,f_1,\ldots,f_n:X\to Y$ such that $f_0=f$, $f_n=g$ and $f_i$, $f_{i+1}$ are comparable for each $0\leq i<n$.

There is a relationship between the topology of $X$ and the topology of its order complex $\K(X)$, which was first discovered by McCord \cite{MC66}: there exists a natural weak equivalence $\mu_X:\K(X)\to X$. In particular they have the same homotopy groups and homology groups. This implies for example that the posets $\A_p(G)$ and $\S_p(G)$ are weak equivalent (viewed as finite topological spaces) since their order complexes are homotopy equivalent, but they are not in general homotopy equivalent as finite spaces. The classical theorem of J.H.C. Whitehead is no longer true in the context of finite spaces, and in general $\K(X)$ and $X$ are not homotopy equivalent (although they are weak equivalent). The notion of homotopy equivalence in the context of finite spaces is therefore strictly stronger than the corresponding notion in the context of simplicial complexes (see \cite{Bar11a} for examples of non-contractible finite spaces $X$ such that the corresponding $\K(X)$ are contractible). In fact, the homotopy types of finite spaces correspond to \it strong homotopy types \rm in the context of simplicial complexes (see \cite{BM12}). As we mentioned above, in \cite{Sto84} Stong proved that, for $G=\SS_5$, the symmetric group on five letters, and $p=2$, the finite spaces $\A_p(G)$ and $\S_p(G)$ do not have the same homotopy type.

The classification of homotopy types of finite spaces can be done combinatorially. This was studied by Stong in a previous article \cite{Sto66}, using  the notion of \it beat point. \rm  An element $x\in X$ is called a {\it down beat point} if $\hat{U}_x=\{ y \in X,\ y<x\}$ has a maximum, and it is an {\it up beat point} if $\hat{F}_x=\{y\in X,\ x<y\}$ has a minimum. If $x$ is a beat point (down or up), the inclusion $X-x\hookrightarrow X$ is a strong deformation retract and conversely, every strong deformation retract is obtained by removing beat points. A space without beat points is called a {\it minimal space}. Removing all beat points of $X$ leads to a minimal space called the \textit{core} of $X$. This core is unique up to homeomorphism, and two finite posets $X$ and $Y$ have the same homotopy type if and only if their cores are homeomorphic. It is easy to see that a poset with maximum or minimum is contractible.

If $G$ is a group and $X$ is a $G$-poset, instead of removing a single beat point $x$, we can remove the orbit $Gx$ and obtain an equivariant strong deformation retract $X-Gx\hookrightarrow X$. It can be shown that if $f:X\to Y$ is an equivariant map which is also a homotopy equivalence, then $f$ is in fact an equivariant homotopy equivalence \cite[Proposition 8.1.6]{Bar11a}. In particular, $X$ has a $G$-invariant core, and if, in addition, it is contractible, then $X$ has a fixed point. Using these facts, Stong proved the following.

\begin{proposition}[Stong] \label{stongprop}
	Let $G$ be a finite group and $p$ a prime number.
\begin{enumerate}
		
\item If $\A_p(G)$ is contractible then $G$ has a nontrivial normal $p$-subgroup.
\item $\S_p(G)$ is contractible if and only if $G$ has a nontrivial normal $p$-subgroup.
\end{enumerate}
\end{proposition}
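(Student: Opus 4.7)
The plan is to exploit the equivariant contraction machinery for finite spaces summarized just before the statement: the groups $G$ acts on both $\S_p(G)$ and $\A_p(G)$ by conjugation, and any $G$-equivariant self-homotopy equivalence of such a poset is an equivariant homotopy equivalence, so a contractible $G$-poset admits a $G$-fixed point.

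For part (1), I would observe that $\A_p(G)$ is a $G$-poset under conjugation, and that the fixed points are precisely the nontrivial elementary abelian $p$-subgroups $A\le G$ with $gAg^{-1}=A$ for all $g\in G$, i.e.\ the normal elementary abelian $p$-subgroups of $G$. If $\A_p(G)$ is contractible as a finite space, then by the equivariant result cited above it contains a $G$-fixed point, and that point is a nontrivial normal $p$-subgroup of $G$.

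For the $(\Rightarrow)$ direction of part (2) I would use exactly the same argument applied to $\S_p(G)$: the fixed points of the conjugation action on $\S_p(G)$ are the nontrivial normal $p$-subgroups of $G$, so contractibility forces one to exist. For the $(\Leftarrow)$ direction, assume $N:=\O_p(G)$ is nontrivial and define $f\colon \S_p(G)\to\S_p(G)$ by $f(P)=NP$. Since $N$ is normal in $G$, $NP$ is a subgroup, and since $N$ and $P$ are both $p$-groups, so is $NP$; thus $f$ is well-defined and order-preserving. For every $P\in\S_p(G)$ one has $P\le f(P)\ge N$, which gives a fence
\[
\Id_{\S_p(G)}\ \le\ f\ \ge\ c_N,
\]
where $c_N$ is the constant map at $N$. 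By the characterization of homotopy in finite spaces via fences of comparable maps, this shows $\Id_{\S_p(G)}\simeq c_N$, so $\S_p(G)$ is contractible.

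The main conceptual point, and the only potentially subtle step, is the equivariant contractibility statement used in (1) and in $(\Rightarrow)$ of (2); the rest is formal. In particular, notice the asymmetry explaining why no converse of (1) is claimed: in the $\S_p(G)$ case the normal $p$-subgroup $N$ itself sits inside $\S_p(G)$ and can be used as the constant-map target, while in $\A_p(G)$ there is no analogous canonical elementary abelian normal $p$-subgroup to contract onto, which is precisely what motivates the rest of the paper.
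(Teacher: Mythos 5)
Your proposal is correct and follows essentially the same route the paper indicates: the equivariant-core/fixed-point argument (via the cited result that an equivariant homotopy equivalence of finite $G$-posets is an equivariant homotopy equivalence, so a contractible $G$-poset has a fixed point) for part (1) and the forward direction of (2), and the conical contraction $P\le NP\ge N$ with $N=\O_p(G)$ for the converse. The paper merely attributes the result to Stong without writing out these details, but your argument is exactly the intended one and is complete.
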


 From this result one deduces that Quillen's Conjecture \cite[Conjecture 2.9]{Qui78} can be restated as follows: If $\K(\S_p(G))$ is contractible then $\S_p(G)$ is a contractible finite space (see \cite[Chapter 8]{Bar11a} for more details). 
 
 By Proposition \ref{stongprop}, if $\A_p(G)$ is contractible then $S_p(G)$ is contractible. In a note at the end of \cite{Sto84}, Stong asked whether the converse holds. We will show below that the contractibility of $\S_p(G)$ does not imply the contractibility of $\A_p(G)$ and we will exhibit the smallest counterexample.

\section{Some cases for which $\S_p(G)\simeq \A_p(G)$ and the answer to Stong's question}

In this section we prove two results that will help us to find the minimal counterexample of a group $G$ such that $\S_p(G)$ and $\A_p(G)$ do not have the same homotopy type. Recall that Stong's counterexample is $G=\SS_5$ (see \cite{Sto84}). On the other hand, we will use Propositions \ref{propHomotEquivSufConditions} and \ref{propContracibilitySufConditions} to find a counterexample that answers Stong's question  \cite[Section 3]{Sto84}.

We denote by $\Omega_1(G)$ the subgroup of $G$ generated by the elements of order $p$. The centralizer of $H$ in $G$ is denoted, as usual, by $C_G(H)$ and the center of $G$ by $Z(G)$. The set of Sylow $p$-subgroups is denoted by $Syl_p(G)$. Recall that $\O_p(G)$ denotes the intersection of all Sylow $p$-subgroups of $G$ and $\O_{p'}(G)$ denotes the largest normal subgroup of $G$ of order coprime to $p$. The \it height \rm $h(X)$ of a poset $X$ is one less than the maximum number of elements in a chain of $X$. We denote by $M(X)$ the set of maximal elements of a poset $X$.

\begin{remark}
	If $G$ is a $p$-group (or more generally, if it has a unique Sylow $p$-subgroup), then both $\S_p(G)$ and $\A_p(G)$ are contractible. Moreover $\S_p(G)$ has a maximum. On the other hand, $p\mid |C_G(\Omega_1(G))|$ because $1<Z(G)\leq C_G(\Omega_1(G))$, which implies that  $\A_p(G)$ is contractible (see Proposition \ref{propositionApGContractibilityLowSteps}).
\end{remark}

\begin{proposition}\label{propHomotEquivSufConditions}
	In the following cases $\S_p(G)$ and $\A_p(G)$ have the same homotopy type:
	\begin{enumerate}
		\item  $\Omega_1(G)$ is abelian for each $P\in  Syl_p(G)$ (see \cite[Section 3]{Sto84}),
		\item $\A_p(G)$ is discrete (i.e. $h(\A_p(G))=0$),
		\item $G=D_n$ (the dihedral group),
		\item $|G| = p^\alpha q$.
	\end{enumerate}
	Moreover, $\A_p(G)\subset \S_p(G)$ is a strong deformation retract if and only if it is a retract, and this happens if and only if condition (1) holds.
\end{proposition}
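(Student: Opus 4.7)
Plan: The main tool is the candidate retraction $r\colon \S_p(G)\to \A_p(G)$ given by $r(Q)=\Omega_1(Q)$. Since $\Omega_1(Q)$ is generated by elements of order $p$, it lies in $\A_p(G)$ exactly when it is abelian, and in that case $r(Q)\le Q$ and $r|_{\A_p(G)}=\mathrm{id}$; the fence $r\le\mathrm{id}$ then yields a strong deformation retract via the standard comparability-homotopy criterion recalled in the preliminaries. If (1) holds, $\Omega_1(Q)\le \Omega_1(P)$ is abelian for every $p$-subgroup $Q\le P$, so $r$ is a well-defined poset map and (1) is proved. Conversely, given any retraction $r'\colon \S_p(G)\to \A_p(G)$, order-preservation forces $\langle x\rangle = r'(\langle x\rangle)\le r'(P)$ for each order-$p$ element $x\in P$; hence $\Omega_1(P)\le r'(P)\in \A_p(G)$, so $\Omega_1(P)$ is abelian. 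This establishes the ``moreover'' clause.

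Cases (2) and (3) then reduce to (1) in most subcases. For (2), height $0$ of $\A_p(G)$ rules out $\ZZ_p\times\ZZ_p\le G$, so every Sylow $p$-subgroup has a unique minimal subgroup and the classical classification forces $P$ cyclic (or generalized quaternion when $p=2$); in either case $\Omega_1(P)\cong\ZZ_p$. For (3) with $G=D_n$, the Sylow $p$-subgroup is cyclic for odd $p$ (it lies in the rotation subgroup) and of order $2$ when $p=2$ with $n$ odd, so (1) applies in those ranges. The remaining subcase $p=2$, $n$ even is handled directly by the central half-rotation $z=r^{n/2}$: both $A\mapsto A\langle z\rangle\ge \langle z\rangle$ on $\A_2(D_n)$ and $Q\mapsto Q\langle z\rangle\ge \langle z\rangle$ on $\S_2(D_n)$ give contractions through a common fixed subgroup, so both posets are contractible.

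For case (4), $|G|=p^\alpha q$, I split on the Sylow $p$-structure. If a Sylow $p$-subgroup $P$ is normal, then $P$ is the maximum of $\S_p(G)$ and $\Omega_1(P)$ the maximum of $\A_p(G)$, so both are contractible. Otherwise, $G$ is solvable by Burnside; if $\O_p(G)=1$ then a minimal normal subgroup cannot be a $p$-group, so it must be the Sylow $q$-subgroup $\ZZ_q$, giving $G\cong \ZZ_q\rtimes P$ with $P$ acting faithfully on $\ZZ_q$, hence $P\hookrightarrow \mathrm{Aut}(\ZZ_q)\cong \ZZ_{q-1}$ is cyclic and we reduce to (1). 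If $\O_p(G)\ne 1$, Stong gives $\S_p(G)$ contractible, and the plan for $\A_p(G)$ is to extract a central $p$-element $z$: the $P$-fixed-point subgroup of the characteristic elementary abelian $\Omega_1(Z(\O_p(G)))$ is non-trivial (action of a $p$-group on a $p$-group), and when the Sylow $q$-subgroup is normal this $z$ automatically centralizes it and lies in $Z(G)$, so $A\mapsto A\langle z\rangle$ contracts $\A_p(G)$.

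The main obstacle will be the residual subcase of (4) in which $\O_p(G)\ne 1$, $\O_{p'}(G)=1$, and $Z(G)=1$ (as happens for $G=\SS_4$, $p=2$). Here no central $p$-element exists and the naive ``product with a fixed subgroup'' fails because $A\cdot \O_p(G)$ need not be abelian. The strategy in this case is to exploit the self-centralizing property of the Fitting subgroup for solvable groups with $\O_{p'}(G)=1$, combined with a direct beat-point analysis on $\A_p(G)$---either iteratively removing up/down beat points, or constructing a non-uniform deformation built from the centralizers $C_{\O_p(G)}(A)$---to establish contractibility in this final sliver of case (4). This delicate analysis is where the bulk of the technical work lies.
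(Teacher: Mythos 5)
Your treatment of (1), the ``moreover'' clause, (2), (3), and the easy subcases of (4) is correct and essentially matches the paper's route: the retraction $Q\mapsto\Omega_1(Q)$ with the fence $ir\le\Id$, the forced $\Omega_1(P)\le r'(P)$ for any retraction $r'$, the unique-subgroup-of-order-$p$ analysis when $h(\A_p(G))=0$, the central half-rotation for $D_n$, and the reduction of (4) to case (1) or to a central $p$-element when the Sylow $p$- or $q$-subgroup is normal (or $\O_p(G)=1$). The genuine gap is the last subcase of (4) --- $\O_p(G)>1$, Sylow $q$-subgroup not normal, $p\nmid|Z(G)|$ --- which you leave as a ``strategy'' rather than a proof. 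This is not a residual sliver: it is the entire technical content of case (4) (it contains, for instance, $G=\SS_4$ with $p=2$, as you note), and neither device you name closes it. A ``deformation built from the centralizers $C_{\O_p(G)}(A)$'' is not an order-preserving self-map of $\A_p(G)$ (the assignment $A\mapsto C_{\O_p(G)}(A)$ reverses inclusions and its values need not be elementary abelian), and ``iteratively removing up/down beat points'' restates the goal without producing the beat points.

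What the paper actually does here, and what your sketch is missing, rests on two inputs. First, for $|G|=p^\alpha q$ with non-trivial intersections of Sylow $p$-subgroups, one has $P\cap P'=\O_p(G)$ for \emph{every} pair of distinct Sylow $p$-subgroups (proof of \cite[Theorem 1.36]{MI}); this is what lets one control the intersection of maximal $p$-tori lying in different Sylow subgroups. Second, after reducing to $G=\Omega_1(G)$, the non-normality of the Sylow $q$-subgroup gives $\O_{p'}(G)=1$, whence by Hall--Higman $C_G(\O_p(G))\le\O_p(G)$; therefore $\Omega_1(Z(P))\le\O_p(G)$, and $\Omega_1(Z(P))$ is contained in every maximal $p$-torus of $P$. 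Combining the two facts, the map $A\mapsto r(A)\cap\O_p(G)$, where $r(A)$ denotes the intersection of all maximal $p$-tori containing $A$, takes non-trivial values, and the fence $A\le r(A)\ge r(A)\cap\O_p(G)$ exhibits $\A_p(G)\simeq\A_p(\O_p(G))\simeq *$. You correctly identified self-centralization of $\O_p(G)$ as the relevant algebraic lever, but without the uniform description of Sylow intersections and this explicit order-preserving contraction, your proof of (4) is incomplete.
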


\begin{proof}
	We prove (1) and the moreover part first. If $\Omega_1(P)$ is abelian for each Sylow $p$-subgroup $P$, then $r(Q) = \Omega_1(Q)$ is a non-trivial elementary abelian subgroup. Thus $r:\S_p(G)\to\A_p(G)$ is a retraction. If $i:\A_p(G)\hookrightarrow\S_p(G)$ is the inclusion map, then $ir\leq \Id_{\S_p(G)}$ and $ri = \Id_{\A_p(G)}$. This proves (1). On the other hand, if $r:\S_p(G)\to\A_p(G)$ is a retraction, then it is easy to see that  $r(Q) \geq \Omega_1(Q)$ for each $p$-subgroup $Q$. Thus $\Omega_1(P)$ is abelian for each Sylow $p$-subgroup $P$.
	
	If the height of $\A_p(G)$ is $0$ then $\Omega_1(P)$ is abelian because there is exactly one subgroup of order $p$ in each Sylow $p$-subgroup $P$.
	
	Suppose now that $G=D_n$. If $p$ is odd there is only one Sylow $p$-subgroup. The case $p=2$ follows straightforwardly from the structure of the subgroups of dihedral groups.
	
	Finally we prove the case $|G|=p^\alpha q$. We can assume that $p\neq q$ and that the number of Sylow $p$-subgroups is $q$. If each pair of distinct Sylow $p$-subgroups has trivial intersection, then
	\[\S_p(G) = \coprod_{P\in Syl_p(G)}\S_p(P)\simeq \coprod_{P\in Syl_p(G)} \{*\} \simeq\coprod_{P\in Syl_p(G)}\A_p(P) = \A_p(G)\]
	Thus, we can suppose that there exist two distinct Sylow $p$-subgroups $P,P'$ for which $P\cap P' > 1$. By the proof of \cite[Theorem 1.36]{MI}, $\O_p(G) = P\cap P'$ for each pair of distinct Sylow $p$-subgroups $P$ and $P'$. In particular, $\S_p(G)$ is contractible and hence it remains to show that $\A_p(G)$ is contractible.
	
	Assume that $\A_p(G)$ is not contractible. Note that $\A_p(G) = \A_p(\Omega_1(G))$ and $\S_p(\Omega_1(G))\subset \S_p(G)$ is a strong deformation retract (the retraction being $R\mapsto R\cap \Omega_1(G)$). Also, $|\Omega_1(G)| = p^{\alpha'} q$, so without loss of generality  we may suppose that $G= \Omega_1(G)$.
	
	Since $\A_p(G)$ is not contractible, in particular it is not contractible in two steps and by Proposition \ref{propositionApGContractibilityLowSteps}, this implies that $p\nmid |Z(G)|$. Now we affirm that the Sylow $q$-subgroups are not normal. Otherwise, take $P$ a Sylow $p$-subgroup and $Q$ a Sylow $q$-subgroup. Then $G = PQ$ is a semidirect product. On the other hand $\O_p(G)>1$, and we can take $N$ to be a minimal non-trivial normal $p$-subgroup. Then $Q\leq C_G(N)$ and $N\leq P$ is a minimal normal $p$-subgroup. Since $P$ is a $p$-group, $N\cap Z(P) > 1$, and minimality implies that $N\leq Z(P)$, that is, $P\leq C_G(N)$. It follows that $N\leq Z(G)$, and this contradicts the fact that $p\nmid |Z(G)|$.
	In particular, we have proved that $\O_{p'}(G) = 1$ and, by the standard Hall-Higman Lemma $\O_p(G)$ is self-centralizing, i.e. $\O_p(G)\geq C_G(\O_p(G))$.

	Given $A\in \A_p(G)$, let $r(A)$ be the intersection of all  $T\in M(\A_p(G))$ such that $A\leq T$. This defines a map $r:\A_p(G)\to \A_p(G)$. We will show below that $r(A)\cap \O_p(G) >1$ for each $A\in\A_p(G)$. This implies that the map $f:\A_p(G)\to \A_p(\O_p(G))$ defined by $f(A) = r(A) \cap \O_p(G)$ verifies that
	\[if(A) \leq r(A) \geq A\]
	\[fi(A) \geq A\cap \O_p(G) = A\]
	where $i:\A_p(\O_p(G))\hookrightarrow\A_p(G)$ is the inclusion. Hence $$\A_p(G)\simeq \A_p(\O_p(G))\simeq *$$ since $\O_p(G)$ is a $p$-group.
	
	We prove then that $r(A)\cap \O_p(G) > 1$ for each $A\in\A_p(G)$. As we mentioned above, $\O_p(G)=P\cap P'$ for each pair of distinct Sylow $p$-subgroups $P,P'$. Therefore we only need to prove that $\bigcap_i B_i \cap \O_p(G) > 1$ where the $B_i$'s are all maximal $p$-tori in a same Sylow $p$-subgroup, say, $P$. Since $\O_p(G)$ is self-centralizer, $Z(P)\leq \O_p(G)$, and in particular $\Omega_1(Z(P))\leq \O_p(G)$. Consequently,
	\[1 < \Omega_1(Z(P)) = \Omega_1(Z(P))\cap \O_p(G) \leq \bigcap_{B \in M(\A_p(P))} B \cap \O_p(G)\leq \bigcap_i B_i \cap \O_p(G)\]
\end{proof}

The next proposition shows some cases where the contractibility of $\S_p(G)$ implies that of $\A_p(G)$. 

\begin{proposition}\label{propContracibilitySufConditions}
	In any of the following cases, the contractibility of $\S_p(G)$ implies that of $\A_p(G)$:
	\begin{enumerate}
		\item All maximal $p$-tori are conjugated,
		\item $h(\A_p(G))\leq 1$,
		\item $|G|_p\leq p^3$.
	\end{enumerate}
\end{proposition}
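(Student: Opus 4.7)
The strategy is to handle the three conditions one by one, in each case using Proposition \ref{stongprop} to convert the hypothesis ``$\S_p(G)$ is contractible'' into the algebraic condition $\O_p(G)\neq 1$, so that $E := \Omega_1(Z(\O_p(G)))$ is a non-trivial normal elementary abelian $p$-subgroup of $G$. For (1), I would show that $E$ is contained in every maximal $p$-torus. Given such a torus $A$, the product $C_A(E)\cdot E$ is elementary abelian: its generators pairwise commute and all have order dividing $p$. Hence it sits inside some maximal $p$-torus $A'$, and conjugacy of maximal $p$-tori gives $A' = A^g$ for some $g\in G$, so $E \leq A^g$. Normality of $E$ in $G$ then yields $E \leq A$. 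Therefore $\bigcap_{A\in M(\A_p(G))} A \supseteq E > 1$, and Proposition \ref{propositionApGContractibilityLowSteps}(3) forces $\A_p(G)$ to be contractible (in two steps).

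For (2), I would pass to the simplicial side. By Quillen's theorem $\K(\A_p(G))\simeq \K(\S_p(G))$, and finite-space contractibility of $\S_p(G)$ implies (via the McCord weak equivalence and Whitehead's theorem in the simplicial setting) that $\K(\S_p(G))$ is simplicially contractible; hence so is $\K(\A_p(G))$. Since $h(\A_p(G))\leq 1$, the order complex $\K(\A_p(G))$ is $1$-dimensional, i.e., a graph, so being contractible forces it to be a (non-empty) connected tree. Any tree with more than one vertex has a leaf, and a leaf of $\K(\A_p(G))$ corresponds either to a minimal element $x$ whose $\hat F_x$ is a singleton or to a maximal element $x$ whose $\hat U_x$ is a singleton, i.e., to a beat point of $\A_p(G)$. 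Removing such a beat point preserves both the height bound and the tree structure of the order complex, so iterating collapses $\A_p(G)$ to a point.

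For (3), I would split on the height of $\A_p(G)$. Since $|G|_p\leq p^3$ the $p$-rank of $G$ is at most $3$, hence $h(\A_p(G))\leq 2$. If $h(\A_p(G))\leq 1$, case (2) applies. Otherwise $h(\A_p(G))=2$, so there is an elementary abelian subgroup of rank $3$, necessarily of order $p^3=|G|_p$; this subgroup coincides with a Sylow $p$-subgroup, and by conjugacy every Sylow $p$-subgroup of $G$ is elementary abelian. Thus $\Omega_1(P)$ is abelian for every $P\in Syl_p(G)$, and Proposition \ref{propHomotEquivSufConditions}(1) gives $\A_p(G)\simeq \S_p(G)$, which is contractible by hypothesis.

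The main obstacle is the bridge step used in (2): one has to justify that, for a height-$\leq 1$ finite poset $X$, simplicial contractibility of $\K(X)$ is equivalent to contractibility of $X$ as a finite space. The general finite-space Whitehead theorem fails, so this equivalence is not automatic; the beat-point/leaf correspondence described above is the key combinatorial ingredient that rescues it in this restricted setting, and cases (1) and (3) are comparatively short.
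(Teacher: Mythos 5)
Your proof is correct and follows essentially the same route as the paper's: case (1) reduces to showing $\Omega_1(Z(\O_p(G)))$ lies in every maximal $p$-torus via conjugacy and normality (your detour through $C_A(E)\cdot E$ is harmless but unnecessary, since $E$ itself already sits in some maximal torus), case (2) uses that a homotopically trivial poset of height $\leq 1$ is contractible, and case (3) splits on the height exactly as the paper does. The only difference is that for (2) you supply the explicit leaf/beat-point induction where the paper merely asserts the equivalence for graphs; that added detail is correct and welcome.
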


\begin{proof}
	Suppose first that all maximal $p$-tori are conjugated. We claim that the intersection of all of them is non-trivial. Indeed, if $\Omega_1(Z(\O_p(G)))\leq A$, where $A$ is a maximal $p$-torus, then $\Omega_1(Z(\O_p(G))) = \Omega_1(Z(\O_p(G)))^g\leq A^g$ for all $g\in G$ and therefore $\Omega_1(Z(\O_p(G)))$ is a non-trivial $p$-torus contained in the intersection of all maximal $p$-tori. By Proposition \ref{propositionApGContractibilityLowSteps}, $\A_p(G)$ is contractible.
	
	If $h(\A_p(G))\leq 1$, then its order complex is a graph. This implies that, in this case, $\A_p(G)$ is homotopically trivial if and only if it is contractible.
	
	If $|G|_p\leq p^3$, $h(\A_p(G)) = 0$, $1$, or $2$, and in the last case $\A_p(G) = \S_p(G)$.
\end{proof}

We exhibit now various examples. We found these examples using the GAP program \cite{Gap}. Proposition \ref{propHomotEquivSufConditions} and Proposition  \ref{propContracibilitySufConditions} were used to filter most of the groups (thousands) in the  "Small Groups library'' of GAP.

\begin{example} 
	Let $G = ((\ZZ_3\times \ZZ_3)\rtimes \ZZ_8)\rtimes \ZZ_2$ be the group with id [144,182] in the Small Groups library of GAP. Note that $|G| = 2^43^2$. If we take $p=2$, the cores of the finite spaces $\S_p(G)$ and $\A_p(G)$ have $21$ and $39$ elements respectively. In particular, they are not homotopy equivalent. Note that all maximal $p$-tori in a same Sylow $p$-subgroup of this group are conjugated, so in particular all maximal $p$-tori are conjugated in $G$. This example shows that the conditions of item (1) in Proposition \ref{propContracibilitySufConditions} are not sufficient for $\A_p(G)$ and $\S_p(G)$ to be homotopy equivalent. 
\end{example}

\begin{example}
	Let $G = \SS_3\wr \ZZ_2$, i.e. $G=(\SS_3\times \SS_3)\rtimes \ZZ_2$ where the action of $\ZZ_2$ interchanges the coordinates. The order of $G$ is $72$ and, for $p=2$, the posets $\S_p(G)$ and $\A_p(G)$ are not homotopy equivalent. This can be verified by computing their cores, which have $21$ and $39$ elements respectively.  This example shows that the conditions of item (3) in Proposition \ref{propContracibilitySufConditions} are not sufficient for $\A_p(G)$ and $\S_p(G)$ to be homotopy equivalent.

\end{example}

Surprisingly, the next group for which these posets are not homotopy equivalent is $G=\SS_5$ (and $p=2$), which is Stong's example (see \cite[Section 3]{Sto84}). We will show below that the previous example is in fact the minimum example of a group $G$ whose posets $\A_p(G)$ and $\S_p(G)$ are not homotopy equivalent (for some $p$).

In the examples of above the prime was $p=2$. In principle, this is because they have small order and can be computed from the Small Groups library of GAP. However, the next example shows that $\S_p(G)\simeq \A_p(G)$ also fails for $p >2$.

\begin{example}
	Let $G$ be the group isomorphic to
	\[(((\ZZ_2\times\ZZ_2)\times ((\ZZ_2\times \ZZ_2\times \ZZ_2\times \ZZ_2)\rtimes \ZZ_3))\rtimes \ZZ_3)\rtimes \ZZ_3\]
	which has id [1728,47861] in the Small Groups library of GAP. Its order is $1728=2^63^3$ and this is the smallest group for which $\S_p(G)$ and $\A_p(G)$ do not have the same homotopy type with a prime $p\neq 2$ ($p=3$ in this case). The cores of $\S_p(G)$ and $\A_p(G)$ have $256$ and $512$ elements respectively.
\end{example}

The following example provides the negative answer to Stong's question \cite[Section 3]{Sto84}: we exhibit a group $G$ for which $\S_p(G)$ is contractible and $\A_p(G)$ is not. 

\begin{example} \label{contraejemplo}
	Let $G$ be the subgroup of $\SS_8$ generated by the permutations $(1\,2\,8\,3)(4\,7)$ and $(1\,6\,3\,7\,8\,5)(2\,4)$. This group has order $576 = 2^63^2$ and id $[576,8654]$ in the Small Groups library of GAP. One can verify the following properties for $p=2$:
	
	\begin{enumerate}
		\item $G$ is isomorphic to $((\AA_4\times \AA_4)\rtimes \ZZ_2)\rtimes \ZZ_2$, where $\AA_4$ is the alternating group in $4$ letters.
		\item $\A_p(G)$ has height $3$.
		\item $G=\Omega_1(G)$.
		\item $\S_p(G)$ is contractible but the core of $\A_p(G)$ has $100$ elements (and therefore it is not contractible).
		\item There is a normal $p$-torus which is a maximal element in the poset $\A_p(G)$.
		\item The group $G$ is solvable by Burnside's Theorem. In particular, it satisfies Quillen's Conjecture (see \cite[Corollary 12.2]{Qui78}).
		\item Since $2^6$ does not divide $7!$, $\SS_8$ is the smallest symmetric group containing a counterexample of this type. Moreover, every subgroup of $\SS_8$ distinct of $G$ verifies that $\S_p(G)\simeq *$ implies $\A_p(G)\simeq *$.
		\item The Fitting subgroup verifies $F(G) = \O_p(G)$. In particular, $\O_p(G)$ is self-centralizing.
	\end{enumerate}
\end{example}	

	To produce this counterexample we applied the filters provided by Proposition \ref{propHomotEquivSufConditions} and Proposition \ref{propContracibilitySufConditions} to all subgroups of order less than or equal to $576$ in the Small Groups library of GAP.

We prove now that $72$ is the minimum order for which $\A_p(G)$ and $\S_p(G)$ do not have the same homotopy type (for some $p$).

\begin{proposition}\label{minimocardinal}
	If $|G|<72$ then $\S_p(G)\simeq \A_p(G)$ for each prime $p$.
\end{proposition}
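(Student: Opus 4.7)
The plan is to invoke the filters provided by Proposition \ref{propHomotEquivSufConditions} and split into cases according to the $p$-part $|G|_p$ of the order, for each prime $p$ dividing $|G|$.

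First, if $|G|_p \leq p^2$, then every Sylow $p$-subgroup $P$ has order $p$ or $p^2$ and is therefore abelian, so $\Omega_1(P)=P$ is abelian and Proposition \ref{propHomotEquivSufConditions}(1) yields $\S_p(G)\simeq \A_p(G)$. Since $5^3=125>72$, the remaining case $|G|_p\geq p^3$ can only occur for $p\in\{2,3\}$, so the problem reduces to these two primes.

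For $p=3$ and $|G|<72$, the condition $27\mid|G|$ forces $|G|\in\{27,54\}$. The groups of order $27$ are $3$-groups, hence by the remark preceding Proposition \ref{propHomotEquivSufConditions} both $\S_3(G)$ and $\A_3(G)$ are contractible; groups of order $54=3^3\cdot 2$ satisfy the hypothesis $|G|=p^\alpha q$, so Proposition \ref{propHomotEquivSufConditions}(4) applies. For $p=2$ and $|G|<72$, we have $|G|_2\in\{8,16,32,64\}$. The strict $2$-group orders $|G|\in\{8,16,32,64\}$ are handled by the remark; the remaining orders are $24=2^3\cdot 3$, $40=2^3\cdot 5$, $56=2^3\cdot 7$ (with $|G|_2=8$) and $48=2^4\cdot 3$ (with $|G|_2=16$). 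Each of these is of the form $p^\alpha q$ with $q$ an odd prime, so Proposition \ref{propHomotEquivSufConditions}(4) applies again.

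This exhausts every group $G$ with $|G|<72$ and every prime $p\mid|G|$, so in all cases $\S_p(G)\simeq \A_p(G)$. The entire argument is a finite case analysis; the only step requiring any thought is the enumeration confirming that no order below $72$ escapes the filters, which is transparent once one observes that $p\in\{2,3\}$ suffices and that every multiple of $p^3$ less than $72$ is either a $p$-power or of the form $p^\alpha q$ with $q$ prime.
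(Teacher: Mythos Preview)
Your proof is correct and follows essentially the same approach as the paper's: both reduce to the case $|G|_p\geq p^3$ via Proposition~\ref{propHomotEquivSufConditions}(1) (abelian Sylow $p$-subgroups), then verify that every remaining order below $72$ is either a $p$-power or of the form $p^\alpha q$ with $q$ prime, so that Proposition~\ref{propHomotEquivSufConditions}(4) applies. The only difference is presentational: the paper argues uniformly by bounding the coprime part $m<9$ and ruling out $m=4,6,8$, whereas you split explicitly into $p=2$ and $p=3$ and list all orders---your enumeration is complete and the argument is sound.
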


\begin{proof}
	Let $1\leq n<72$ and let $G$ be a group of order $n$. If $p\nmid|G|$ both posets are empty and there is nothing to say. Otherwise, $n=p^\alpha m$ with $\alpha \geq 1$ and $(m:p)=1$. If $\alpha = 1$ or $2$, the Sylow $p$-subgroups are abelian and by Proposition \ref{propHomotEquivSufConditions}, $\A_p(G)\subset \S_p(G)$ is a strong deformation retract. If $\alpha\geq 3$ then $2^33^2= 72 > n = p^\alpha m\geq 2^3m$, and thus $1\leq m < 9$.  For $m=1$ or prime, the result follows from Proposition \ref{propHomotEquivSufConditions} (4). So it remains to show that $m\neq 4$, $6$ and $8$. If $m=4$, $6$ or $8$, as $(p:m)=1$, $p\geq 3$. But then $p^\alpha m\geq 3^34=108>72$.
\end{proof}

\section{The contractibility of $\A_p(G)$ in algebraic terms}

Example \ref{contraejemplo} shows that the contractibility of the finite space $\A_p(G)$ is strictly stronger than the contractibility of $\S_p(G)$. On the other hand, by Proposition \ref{stongprop}, the contractibility of $\S_p(G)$ is equivalent to $G$ having a nontrivial normal $p$-subgroup. Our aim is to understand the contractibility of $\A_p(G)$ in purely algebraic terms.

\begin{definition}
Let $f,g:X\to Y$ be two order-preserving maps between finite posets. We say that $f$ and $g$ are homotopic in $n$ steps (with $n\geq 0$) if there exist $f_0,\ldots,f_n:X\to Y$ such that $f=f_0$, $f_n =g$ and $f_i$, $f_{i+1}$ are comparable for every $0\leq i<n$. We denote it by $f\sim_n g$.

Two posets $X$ and $Y$ are homotopy equivalent in $n$ steps (denoted by $X\sim_n Y$) if there are maps $f:X\to Y$ and $g:Y\to X$ such that $fg\sim_n \text{Id}_Y$ and $gf\sim_n \text{Id}_X$. We say that $X$ is contractible in $n$ steps if $X\sim_n *$ (the singleton), or equivalently, there exist $x_0\in X$ and $f_0=\text{Id}_X,f_1,\ldots,f_n=c_{x_0}:X\to X$, where $c_{x_0}$ is the constant map $x_0$, such that $f_i$ and $f_{i+1}$ are comparable for each $i$.
\end{definition}

\begin{remark}\label{remarkSteps}
Note that $X\sim_0 Y$ if and only if they are homeomorphic, and that $X$ is contractible in $1$ step if and only if it has a maximum or a minimum. Note also that in this case, $X$ can be carried to a point by only removing up beat points (if it has a maximum), or down beat points (if it has a minimum). Thus, contractibility in $1$ step means that only one type of beat points is needed to be removed.
Note also that if $X\sim_n Y$ and $Y\sim_m Z$, then $X\sim_{n+m} Z$.
\end{remark}

Suppose $X$ is a contractible finite space. As we explained above, this means that there exists an ordering $x_1,\ldots,x_r$ of the elements of $X$ such that $x_i$ is a beat point of $X-\{x_1,\ldots,x_{i-1}\}$ for $i=1,\ldots,r-1$. In each step, $x_i$ can be an up beat point or a down beat point.  We say that the beat points can be \textit{removed with (at most) $n$ changes} if there are 
$1<i_1<i_2<\ldots<i_n\leq r-1$ such that all the beat points between $x_1$ and $x_{i_1-1}$, $x_{i_1}$ and $x_{i_2-1}$,\ldots,$x_{i_n}$ and $x_{r-1}$ are of the same kind. For example, if the poset $X$ has a maximum or minimum, one can reach the singleton by removing beat points without any changes (all up beat points, if it has a maximum, and all down beat points if it has a minimum).

\begin{theorem}\label{theoremStepsChangeBeatPoints}
The poset $X$ is contractible in $n$ steps if and only if we can remove the beat points with (at most) $n-1$ changes.
\end{theorem}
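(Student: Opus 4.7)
The argument hinges on the following \emph{key observation}: if $r : X \to Y \subseteq X$ is a retraction with $r \geq \Id_X$ (respectively $r \leq \Id_X$), then $Y$ is obtained from $X$ by iteratively removing up beat points (respectively down beat points). To prove the up-case, pick $x \in X \setminus Y$ maximal in $X \setminus Y$ and set $y := r(x)$; since $x \notin Y$ while $r \geq \Id$, we have $y > x$. For any $z > x$, order-preservation gives $r(z) \geq r(x) = y$, and maximality of $x$ in $X\setminus Y$ forces $z \in Y$, hence $z = r(z) \geq y$. Thus $y = \min \hat{F}_x$ and $x$ is an up beat point of $X$. Removing $x$, the restriction of $r$ still satisfies the same hypotheses, so we iterate until $Y$ is reached.

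\textbf{The $(\Leftarrow)$ direction.} Suppose $X$ is reduced to $\{x_0\}$ by a sequence of beat point removals split into $k+1 \leq n$ maximal monochromatic blocks. Write $X = X_0 \supseteq X_1 \supseteq \cdots \supseteq X_{k+1} = \{x_0\}$ for the intermediate posets. Composing the one-step retractions within the $i$-th block produces a retraction $r_i : X_{i-1} \to X_i$ which is comparable with $\Id_{X_{i-1}}$ (upward if the block is ``up'', downward otherwise). The maps $h_i := r_i r_{i-1} \cdots r_1 : X \to X_i \hookrightarrow X$ then form a fence $\Id_X = h_0 \sim h_1 \sim \cdots \sim h_{k+1} = c_{x_0}$, because $h_{i-1}(X) \subseteq X_{i-1}$ and $r_i \sim \Id_{X_{i-1}}$. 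Hence $X \sim_{k+1} *$, and padding with $c_{x_0}$ yields $X \sim_n *$.

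\textbf{The $(\Rightarrow)$ direction.} We induct on $n$, the case $n=0$ being immediate. For $n \geq 1$, fix a fence $\Id = f_0, f_1, \ldots, f_n = c_{x_0}$; deleting repeated or co-monotone consecutive maps we may assume it strictly alternates, and without loss of generality $f_0 \leq f_1 \geq f_2 \leq \cdots$ (the dual case is symmetric). Since $f_1 \geq \Id$ and $X$ is finite, the stabilisation $f_1^\infty$ is idempotent, satisfies $f_1^\infty \geq \Id$, and is a retraction onto $Y := f_1^\infty(X)$. By the key observation, $Y$ is reached from $X$ by removing only up beat points. Now define $g_i : Y \to Y$ by $g_i := f_1^\infty \circ f_i|_Y$; using $f_1^\infty \circ f_1 = f_1^\infty$ and $f_1^\infty|_Y = \Id_Y$ one verifies $g_1 = \Id_Y$ and $g_n = c_{f_1^\infty(x_0)}$, and order-preservation of $f_1^\infty$ passes the comparabilities $f_{i-1} \sim f_i$ to $g_{i-1} \sim g_i$. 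Thus $Y \sim_{n-1} *$, and by the inductive hypothesis $Y$ admits a beat point reduction with at most $n-2$ changes, i.e.\ at most $n-1$ monochromatic blocks. Prepending the single up block from the $X \to Y$ reduction, the first block of the $Y$-reduction either merges with it (if ``up'', giving $\leq n-1$ total blocks) or does not (if ``down'', giving $\leq n$ total blocks); either way the combined reduction of $X$ has at most $n-1$ changes, completing the induction.

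\textbf{Main obstacle.} The principal technical point is the key observation above together with the verification that the induced fence $(g_i)$ on $Y$ is genuinely of length $n-1$ and inherits the correct alternation from $(f_i)$. The final bookkeeping for block-merging when concatenating the $X \to Y$ step with the $Y$-reduction must also be handled with care so that the bound comes out at $n-1$ changes rather than $n$.
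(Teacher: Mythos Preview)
Your proof is correct and follows essentially the same approach as the paper's: both directions proceed by induction, and the heart of the $(\Rightarrow)$ direction is showing that the fixed set of the first map $f_1$ in the fence is reached from $X$ by removing beat points of a single type, after which the fence restricts to one of length $n-1$. Your packaging of this step as a reusable ``key observation'' about retractions $r\geq\Id$ (using the stabilisation $f_1^\infty$) is slightly cleaner than the paper's treatment, which works out the case $n=2$ explicitly with $\fix(g)$ and then refers back to it for $n>2$, but the underlying argument is the same.
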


\begin{proof}

Assume first that there exists an ordering $\{x_1,\ldots,x_k\} = X$ such that $x_j$ is a beat point of $X_j= X-\{x_1,\ldots,x_{j-1}\}$ and that there are at most $n-1$ changes of kind of beat points.

If $n=1$, then they are all down beat points or all up beat points. Suppose the first case. 
For each $j$, let $\hat{U}^{X_j}_{x_j}=\{ x \in X_j,\ x<x_j \}$ and $y_j\in X_j$ be $y_j = \max\hat{U}^{X_j}_{x_j}$. Let $r_j:X_j\to X_{j+1}$ be the retraction which sends $x_j$ to $y_j$ and fixes the other points, and let $i_j:X_{j+1}\to X_j$ be the inclusion. Then $\alpha_1 :=i_1r_1 \leq \Id_{X_1} = \Id_X$. Let $\alpha_j = i_1i_2\ldots  i_j r_j \ldots r_2r_1:X\to X$. Since $i_jr_j\leq \Id_{X_j}$ for all $j$, we conclude that $\alpha_j\leq \Id_X$ for all $j$. In particular, for $j=k-1$, $\alpha_{k-1}\leq \Id_X$ and $\alpha_{k-1}$ is a constant map given that $r_{k-1}:X_{k-1}\to X_k = \{x_k\}$. Consequently, $X\sim_1 *$.

Now assume $n>1$ and take an ordering $\{x_1,\ldots,x_k\}=X$ of beat points with at most $n-1$ changes. Take the minimum $i$ such that  $x_i$ and $x_{i+1}$ are beat points of different kinds. By the same argument used before, it is easy to see that $X\sim_1 X-\{x_1,\ldots,x_{i}\}=X_{i-1}$ because all the beat points removed are of the same type. By induction, $X_{i-1}$ can be carried out to a point by removing beat points with at most $n-2$ changes, and then $X_{i-1}\sim_{n-1} *$. Therefore, by Remark \ref{remarkSteps}, $X\sim_n *$.

Suppose now that $X$ is contractible in $n$ steps and proceed by induction on $n$. If $n=1$, then $X$ has a maximum or a minimum. In that case we can reach the core of $X$ by removing only up beat points in the first case, or only down beat points in the latter case.

Let $n=2$ and assume, without loss of generality, that $\Id_X\leq g\geq c_{x_0}$, where $c_{x_0}$ is the constant map $x_0$. We can suppose that $X$ does not have neither a minimum nor a maximum, and this implies that $g$ is not the identity map. Let $\fix(g)$ denote the subposet of $X$ of points which are fixed by $g$. Note that $M(X)\subseteq \fix(g)\neq X$. Since $\text{Id}_X\leq g$, for any $x\in X$ we have
$$x\leq g(x)\leq g^2(x)\leq g^3(x)\leq \ldots$$
and therefore there exists $i\in \NN$ such that
$g^i(x)\in \fix(g)$.

 Take $x\in X-\fix(g)$ a maximal element. If $x<z$, then $z\in \fix(g)$ by maximality. Now, since $g\geq \Id_X$, we have $x< g(x) \leq g(z) = z$. Therefore, $x$ is an up beat point.

 Let $\{x_1,\ldots,x_k\}$ be a linear extension of $(X-\fix(g))^{op}$ and let $X_j = X-\{x_1,\ldots,x_{j-1}\}$. We affirm that $x_j$ is an up beat point of $X_j$ for each $j\geq 1$. The case $j=1$ is what we did before. Suppose $j>1$ and let $y=g^m(x_j)\in\fix(g)\subseteq X_j$. Take $z\in X_j$ such that $z>x_j$. Then $z\in \fix(g)$ and  $x_j < y=g^m(x_j)\leq g^m(z) = z$, which shows that $x_j$ is an up beat point of $X_j$. Hence, $\fix(g)$ can be obtained from $X$ by removing only up beat points. We show now that $\fix(g)$ has a minimum, using the fact that $g\geq c_{x_0}$. This implies that $\fix(g)$ can be carried out to a single point by removing only down beat points, and hence the beat points of $X$ can be removed with $1$ change (first up beat points and then down beat points). In order to see that $\fix(g)$ has a minimum, take $m\in\NN$ such that $g^m(x_0)\in\fix(g)$. Then, for any $z\in\fix(g)$ we have $z=g^{m+1}(z)\geq g^m(x_0)$.

Suppose now that $n>2$. Assume that there exists a fence $\Id_X\leq g_1\geq g_2\leq g_3\geq \ldots$, with $g_n = c_{x_0}$. Let $Y = \fix(g_1)$. We may suppose that $X\neq Y$. By the same argument used in the case $n=2$, $Y$ is obtained from $X$ by removing only up beat points. Let $i:Y\hookrightarrow X$ be the inclusion map and $r:X\to Y$ the retraction given  by the extraction of the up beat points. Then
\[\Id_Y\geq rg_2 i\leq rg_3i\geq \ldots \overset{\leq}{\geq} rg_ni = c_{rg_n(x_0)}\]
Then $Y\sim_{n-1} *$ and, by induction, the beat points of $Y$ can be removed with at most $n-2$ changes. This concludes the proof.
\end{proof}

The contractibility of $\A_p(G)$ in few steps can be described in purely algebraic terms. First we need a lemma.

\begin{lemma}\label{lemmaHomotopyCRL}
If $f,g:\A_p(G)\to \A_p(G)$ are two maps such that $\Id_{\A_p(G)}\geq f\leq g$, then $\Id_{\A_p(G)}\leq g$.
\end{lemma}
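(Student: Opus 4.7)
The plan is to exploit the key structural feature of the poset $\A_p(G)$: every element $A$ is an elementary abelian $p$-group, hence it is generated by its subgroups of order $p$. The strategy is therefore to prove the inequality $A \leq g(A)$ first on the minimal elements $B$ of $\A_p(G)$ (the order-$p$ subgroups) and then propagate it upward by order-preservation.

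First I would fix $A \in \A_p(G)$ and consider an arbitrary subgroup $B \leq A$ of order $p$, which is itself an element of $\A_p(G)$ with $B \leq A$. Applying the hypothesis $f \leq \Id_{\A_p(G)}$ to $B$ gives $f(B) \leq B$, but because $B$ has order $p$ and $f(B)$ lies in $\A_p(G)$ (so in particular is non-trivial), the only possibility is $f(B) = B$. Now applying the hypothesis $f \leq g$ at $B$ yields $B = f(B) \leq g(B)$.

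Next I would propagate upward: from $B \leq A$ and order-preservation of $g$, we obtain $g(B) \leq g(A)$, hence $B \leq g(A)$. Since this holds for every subgroup $B \leq A$ of order $p$, and since $A$ is elementary abelian and therefore generated by its subgroups of order $p$ (every non-identity element of $A$ generates one), the subgroup $g(A)$ contains a generating set of $A$, so $A \leq g(A)$. As $A$ was arbitrary, this gives $\Id_{\A_p(G)} \leq g$.

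I do not expect any serious obstacle: the only subtle point is the observation that $f(B)$ must equal $B$ (not the trivial subgroup) because $\A_p(G)$ excludes the trivial group. The rest is straightforward use of order-preservation and the elementary abelian structure, which is exactly what distinguishes $\A_p(G)$ from the full $p$-subgroup poset $\S_p(G)$ — note that the analogous statement for $\S_p(G)$ would fail since a general $p$-subgroup need not be generated by its order-$p$ subgroups.
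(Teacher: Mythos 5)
Your proof is correct and follows essentially the same route as the paper's: both arguments observe that $f$ must fix every order-$p$ subgroup (the minimal elements of $\A_p(G)$), deduce $\gen{a}=f(\gen{a})\leq g(\gen{a})\leq g(A)$ for each non-trivial $a\in A$, and conclude $A\leq g(A)$ because $A$ is generated by such elements. No gaps.
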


\begin{proof}
Let $A\in\A_p(G)$. We need to prove that $A\leq g(A)$. Take $a\in A$ a non-trivial element. Thus, $\gen{a}$ is a minimal $p$-torus and therefore
\[\gen{a} = f(\gen{a})\leq g(\gen{a})\leq g(A)\]
This means that $a\in g(A)$ for each non-trivial element $a\in A$. Consequently, $A\leq g(A)$.
\end{proof}

\begin{proposition}\label{propositionApGContractibilityLowSteps}
The followings assertions hold:
\begin{enumerate}
\item $\A_p(G)$ is contractible in $0$ steps if and only if $G$ has only one subgroup of order $p$, i.e. $\Omega_1(G) \simeq \ZZ_p$,
\item $\A_p(G)$ is contractible in $1$ step if and only if $\A_p(G)$ has a maximum, if and only if $\Omega_1(G)$ is abelian,
\item $\A_p(G)$ is contractible in $2$ steps if and only if the intersection of all maximal $p$-tori is non-trivial, if and only if $p\mid |C_G(\Omega_1(G))|$,
\item $\A_p(G)$ is contractible in $3$ steps if and only if there exists a $p$-torus subgroup of $G$ which intersects (in a non-trivial way) every non-trivial intersection of maximal $p$-tori.
\end{enumerate}
\end{proposition}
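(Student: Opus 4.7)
My plan is to prove the four equivalences in sequence, leveraging Lemma \ref{lemmaHomotopyCRL} to compress arbitrary $n$-step fences on $\A_p(G)$ into canonical forms and then reading off the algebraic translation. The guiding principle throughout is that any intersection of elementary abelian $p$-subgroups of $G$ is again elementary abelian, and that if $M$ is maximal in $\A_p(G)$ and $f(M) \geq M$, then $f(M) = M$ by maximality.

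Parts (1) and (2) are warm-ups. $\A_p(G) \sim_0 *$ means $\A_p(G)$ is a singleton, which, since every order-$p$ subgroup belongs to $\A_p(G)$, happens exactly when $\Omega_1(G) \cong \ZZ_p$. For (2), Remark \ref{remarkSteps} says $\A_p(G) \sim_1 *$ iff $\A_p(G)$ admits a maximum or a minimum; a minimum of $\A_p(G)$ reduces to case (1), while a maximum must contain every order-$p$ subgroup and hence equal $\Omega_1(G)$, which lies in $\A_p(G)$ precisely when $\Omega_1(G)$ is abelian.

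For (3), I would run a case analysis on the possible $2$-step fences $\Id,f_1,c_{x_0}$. Lemma \ref{lemmaHomotopyCRL} collapses every arrangement except $\Id\leq f_1\geq c_{x_0}$ to the shorter $1$-step situation. In that remaining pattern, maximality of each $M\in M(\A_p(G))$ forces $M=f_1(M)\geq x_0$, so $x_0\leq \bigcap M$. Conversely, given $1<x_0\in\bigcap M$, the map $f_1(A):=\gen{A,x_0}$ is well-defined in $\A_p(G)$ because $A$ and $x_0$ always share a maximal torus, and it assembles the required fence. The second equivalence with $p\mid |C_G(\Omega_1(G))|$ is a standard maximality argument: an order-$p$ element of $\bigcap M$ commutes with every order-$p$ element of $G$ (they live in a common maximal torus), and conversely an order-$p$ element of $C_G(\Omega_1(G))$ adjoined to any maximal torus $M$ yields an elementary abelian subgroup that, by maximality, must equal $M$.

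For (4), the same case analysis on $3$-step fences reduces every shape to $\Id\leq g_1\geq g_2\leq c_{x_0}$, since any interior valley is killed by Lemma \ref{lemmaHomotopyCRL} (collapsing to the $2$-step case, a strictly stronger algebraic condition). Given such a fence and any nontrivial intersection $I=\bigcap_{M\in\mathcal{S}} M$ of maximal $p$-tori, the chain $I\leq g_1(I)\leq \bigcap_{M\in\mathcal{S}} g_1(M)=I$ forces $g_1(I)=I$, and then $1<g_2(I)\leq I\cap x_0$, proving that $T:=x_0$ meets every such $I$ nontrivially. For the converse, I would set $r(A):=\bigcap\{M\in M(\A_p(G)):A\leq M\}$ and $s(A):=T\cap r(A)$; these are order-preserving, take values in $\A_p(G)$ (the key being that $A\leq r(A)$ ensures $r(A)$ is a nontrivial intersection of maximal tori, whence $s(A)>1$ by the hypothesis on $T$), and yield the fence $\Id\leq r\geq s\leq c_T$. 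The main obstacle I anticipate is precisely this bookkeeping in (4): enumerating each of the eight $3$-step fence patterns, confirming that each non-canonical one genuinely collapses via the lemma, and checking that $r$ and $s$ remain inside $\A_p(G)$—a point that rests entirely on the nontriviality built into the hypothesis on $T$.
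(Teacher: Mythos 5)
Your proposal is correct and follows essentially the same route as the paper: normalize the fence to the canonical zigzag $\Id\leq f_1\geq f_2\leq\ldots$ using Lemma \ref{lemmaHomotopyCRL}, use maximality to force $f_1(M)=M$ on maximal $p$-tori, and build the converse fences via the retraction $r(A)=\bigcap\{M\in M(\A_p(G)):A\leq M\}$. The only (harmless) cosmetic differences are your use of $\gen{A,x_0}$ in place of $r(A)$ in part (3) and your arguing directly on an arbitrary intersection $I$ of maximal tori in part (4), where the paper first establishes $r(B)\cap N>1$ and then passes to general intersections.
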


\begin{proof}
	Item (1) is clear and Item (2) follows from the previous lemma.
We prove (3) and (4). Assume that $\A_p(G)$ is contractible in $2$ steps. By the previous lemma we can suppose that there exists a map $f:\A_p(G)\to\A_p(G)$ with $\Id_{\A_p(G)}\leq f\geq c_N$, where $c_N$ is the constant map with value $N$, for some $N\in \A_p(G)$. In this way, if $A\in \A_p(G)$ is a maximal element, then $A\leq f(A)$ implies $A = f(A)$. Hence, for each maximal element $A$ we see that $A\geq N$, i.e. $N$ is contained in each maximal element. If $a\in N$ is a non-trivial element, then $a\in C_G(\Omega_1(G))$, which means that $p\mid |C_G(\Omega_1(G))|$. Conversely, if $p\mid |C_G(\Omega_1(G))|$, by Cauchy's Theorem there exists an element $a\in C_G(\Omega_1(G))$ of order $p$. Let $N =  \gen{a}$. Thus, $N\in \A_p(G)$ and $a\in A$ for each maximal $p$-torus $A$. If we denote by $r(B)$ the intersection of all maximal $p$-tori containing $B$, we get
\[B\leq r(B) \geq N\]
This concludes the proof of (3).

If $\A_p(G)$ is contractible in 3 steps, then by the previous lemma we can take a homotopy $\Id_{\A_p(G)}\leq f\geq g\leq c_N$, where $c_N$ is the constant map with value $N$. Moreover, $f(B)\leq r(B)$, and thus $r(B)\geq g(B)\leq N$. This means that $r(B)\cap N \geq g(B) > 1$, and therefore
\[B\leq r(B)\geq r(B)\cap N\leq N\]
is a well-defined homotopy between the identity of $\A_p(G)$ and the constant map $N$. But then $N$ intersects in a non-trivial way every non-trivial intersection of maximal elements of $\A_p(G)$. Note that this also proves the converse. 
\end{proof}

The following example shows that, unlike what happens with $\S_p(G)$ (which is always contractible in two steps since it is conically contractible \cite[Proposition 2.4]{Qui78}), the poset $\A_p(G)$ may be contractible in more than two steps.

\begin{example}
Let $G = \SS_4$. Then $|G| = 2^33$. Since $N=\gen{(1\,2)(3\,4), (1\,3)(2\,4)}$ is a non-trivial normal $2$-subgroup of $G$, both posets $\S_2(G)$ and $\A_2(G)$ are contractible by Item (3) of Proposition  \ref{propContracibilitySufConditions}. In fact, $\A_2(G)$ is contractible in $3$ steps but it is not contractible in two steps. The poset $\i(\A_p(G))$ of non-trivial intersections of maximal elements (see below for a formal definition) is given by
\[\xymatrix @C=.5pc{
\gen{(1\,2),(3\,4)} \ar@{-}[d] & \gen{(1\,2)(3\, 4),(1\,3)(2\, 4)} \ar@{-}[rd]  \ar@{-}[rrd]  \ar@{-}[ld]  & \gen{(1\,3),(2\, 4)} \ar@{-}[d] & \gen{(1\,4),(2\, 3)}  \ar@{-}[d] \\
\gen{(1\,2)(3\,4)} & & \gen{(1\,3)(2\, 4)} & \gen{(1\,4)(2\, 3)} 
}\]
This shows that the intersection of all maximal $p$-tori is trivial, but the subgroup $N$ intersects in a non-trivial way each non-trivial intersection of maximal $p$-tori.
\end{example}

The contractibility of $\A_p(G)$ in more than $3$ steps can be described in algebraic terms but with the aid of an extra combinatorial information of the poset. The methods that we will use are a generalization of those used in the proofs of Lemma \ref{lemmaHomotopyCRL} and Proposition \ref{propositionApGContractibilityLowSteps}.

For a lattice $L$, recall that $L^* = L-\{\hat{0},\hat{1}\}$ is called the \textit{proper part} of $L$. We say that a poset $X$ is a \textit{reduced lattice} if $X= L^*$ for some lattice $L$. Equivalently, for every pair of elements $\{x,y\}$ with an upper bound in $X$ there exists the supremum $x\vee y$. This condition is equivalent to saying that for each pair of elements $\{x,y\}$ with a lower bound in $X$ there exists the infimum $x\wedge y$. Recall that $M(X)$ denotes the set of maximal elements of $X$, similarly we denote by $m(X)$ the minimal elements. If $x\in X$, we denote by $M(x)$ the set of maximal elements over $x$ and by $m(x)$ the set of minimal elements below $x$. A reduced lattice $X$ is \textit{atomic} if every element is the supremum of the minimal elements below it, i.e. if $x = \bigvee_{y\in m(x)} y$ for each $x\in X$. Similarly, $X$ is \textit{coatomic} if $X^{op}$ is atomic, i.e. if $x = \bigwedge_{y\in M(x)} y$ for each $x\in X$.

The poset $\A_p(G)$ is an atomic reduced lattice: the infimum of two $p$-tori is their intersection and the supremum is the subgroup generated by both subgroups.

Given two order preserving maps $f,g:X\to Y$, where $Y$ ia a reduced lattice, such that $\{f(a),g(a)\}$ is lower bounded (resp. upper bounded) for each $a\in X$, we define the maps $f\wedge g, f\vee g:X\to Y$ by $(f\wedge g)(a) = f(a)\wedge g(a)$ and $(f\vee g)(a) = f(a)\vee g(a)$.

\begin{proposition}\label{propFenceAtomic}
Let $X$ be an atomic reduced lattice. If $\Id_X\sim_n g$, then there exist $f_0,\ldots,f_n:X\to X$ with
\[\Id_X = f_0\leq f_1\geq f_2\leq \ldots\overset{\geq}{\leq} f_n = g\]
and such that $f_{2k} = f_{2k-1}\wedge f_{2k+1}$ for each $1\leq k<n/2$ and $f_{2k+1} = f_{2k}\vee f_{2k+2}$ for each $0\leq k<n/2$.
\end{proposition}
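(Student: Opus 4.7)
The plan is to prove the existence of the tight fence in two stages: first, reduce to an alternating fence of length $n$ via a generalization of Lemma \ref{lemmaHomotopyCRL}, and second, produce the desired meet/join conditions by an iterative ``tightening'' of that fence.

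For the first stage I would establish the following atomicity lemma: if $X$ is an atomic reduced lattice and $f\colon X\to X$ is order-preserving with $f\leq\Id_X$, then $f=\Id_X$. At a minimal element $y\in X$, the inequality $f(y)\leq y$ and the fact that $f(y)\in X$ force $f(y)=y$; for arbitrary $x\in X$, atomicity gives $f(x)\geq f(y)=y$ for every $y\in m(x)$, whence $f(x)\geq\bigvee_{y\in m(x)}y=x$, and combined with $f(x)\leq x$ this forces $f(x)=x$. This is a direct generalization of Lemma \ref{lemmaHomotopyCRL}, and it lets one rearrange any witnessing fence $\Id_X=h_0,h_1,\ldots,h_n=g$ into an alternating fence of the same length beginning with $\leq$: whenever the original direction would contradict the alternating pattern and put an intermediate map below a previously visited valley, the lemma collapses it to that valley and one corrects the pattern by repeating a neighboring entry, so the length never grows.

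Given an alternating fence $\Id_X=h'_0\leq h'_1\geq h'_2\leq\cdots=g$, I would then set $f_i^{(0)}:=h'_i$ (freezing $f_0=\Id_X$ and $f_n=g$) and iterate
\[
f_{2k+1}^{(t+1)}:=f_{2k}^{(t)}\vee f_{2k+2}^{(t)},\qquad f_{2k}^{(t+1)}:=f_{2k-1}^{(t+1)}\wedge f_{2k+1}^{(t+1)}.
\]
The required joins and meets exist because the fence structure $f_{2k}\leq f_{2k+1}\geq f_{2k+2}$ is preserved at every stage, and each $f_i^{(t)}$ remains order-preserving since pointwise meets and joins of order-preserving maps are order-preserving whenever they are defined. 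An induction on $t$ shows that the valleys $f_{2k}^{(t)}$ (and, from $t\geq 1$ onward, the peaks $f_{2k+1}^{(t)}$) are non-decreasing in $t$ and uniformly bounded above by $h'_{2k+1}$, so by finiteness of $X$ the iteration stabilizes. At the fixed point the identities $f_{2k+1}=f_{2k}\vee f_{2k+2}$ and $f_{2k}=f_{2k-1}\wedge f_{2k+1}$ hold as equalities, together with the alternating inequalities and the required endpoint conditions.

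The main obstacle I anticipate is the alternating reduction. Atomicity disposes of the very first transition elegantly, but propagating the rearrangement through the interior of the fence without changing the length requires careful combinatorial bookkeeping: each correction must be verified to preserve the endpoints and the total length $n$ and still yield a valid zigzag. An alternative is to forgo the reduction and run a variant of the tightening directly on the original non-alternating witness, replacing $h_i$ pointwise by an extremal value that respects the required pattern; but in either approach the key subtlety is to control length and endpoints simultaneously while exploiting atomicity to rule out any ``dip below the identity''.
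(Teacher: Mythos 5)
Your proposal is correct and takes essentially the same route as the paper: an atomicity lemma controlling maps below $\Id_X$ (the paper states it as $\Id_X\geq \tilde{f}_1\leq \tilde{f}_2\Rightarrow \Id_X\leq \tilde{f}_2$) is used to produce an alternating fence of length $n$ beginning with $\leq$, and the interior maps are then tightened into meets and joins. The only differences are cosmetic: the paper does a single pass (replace every even-indexed $f_i$ by $f_{i-1}\wedge f_{i+1}$, then every odd-indexed one by the join of its new neighbours, which already yields both families of identities simultaneously), so your fixed-point iteration, though valid, is not needed; and in the alternating reduction the atomicity lemma is only required at the very first step, the interior mismatches being resolved by the standard absorption of two consecutive inequalities of the same direction rather than by the lemma.
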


\begin{proof}
	Note first that, since $X$ is an atomic reduced lattice, one can use the same argument as in the proof of  Lemma \ref{lemmaHomotopyCRL} to show that, if $\Id_X\geq \tilde{f}_1\leq \tilde{f}_2$, then $\Id_X\leq \tilde{f}_2$. Then there exists a fence
	
	\[\Id_X = f_0\leq f_1\geq f_2\leq \ldots\overset{\geq}{\leq} f_n = g\]

Now for $i$ even we have $f_{i-1}\geq f_i\leq f_{i+1}$, and we can replace  $f_i$ by $f_{i-1}\wedge f_{i+1}$ and obtain $f_{i-1}\geq f_{i-1}\wedge f_{i+1}\leq f_{i+1}$. Then we can proceed analogously with all odd indexes $i$.
\end{proof}

The following constructions were introduced by J. Barmak in \cite[Chapter 9]{Bar11a}. Given a reduced lattice $X$, let \[\i(X) = \left\{\bigwedge_{x\in S} x: S\subseteq M(X),\, S\neq\emptyset \text{ and lower bounded}\right\}\]  \[\s(X) = \left\{\bigvee_{x\in S} x: S\subseteq m(X),\, S\neq\emptyset \text{ and upper bounded}\right\}\]

With these notations, $X$ is atomic if and only if $X = \s(X)$, and it is coatomic if and only if $X=\i(X)$. Both $\i(X)$ and $\s(X)$ are strong deformation retracts of $X$ (see \cite[Chapter 9]{Bar11a}). Moreover, $\i(X)$ can be obtained from $X$ by extracting only up beat points, and $\s(X)$ by extracting only down beat points. As $\i\i(X) = \i(X)$ and $\s\s(X) = \s(X)$, we can perform these two operations until we obtain a core of $X$. In particular, the core of $X$ is both an atomic and coatomic reduced lattice. Let $n\geq 0$. If $X$ is atomic and $n\geq 0$, denote by $X_n$ the $(n+1)$-th term in the sequence \[X\supseteq \i(X)\supseteq\s\i(X)\supseteq\i\s\i(X)\supseteq\ldots\]

In the same way, when $X$ is coatomic denote by $X_n$ the $(n+1)$-th term in the sequence
\[X\supseteq \s(X)\supseteq\i\s(X)\supseteq\s\i\s(X)\supseteq\ldots\]
Note that if $X$ is a $G$-poset, then $\i(X)$ and $\s(X)$ are $G$-invariant. So this method provides an easy tool to find a $G$-invariant core of $X$.

\begin{remark}
	Note that if $X$ is a reduced lattice and $\Id_X\leq f$, then $f(x)\leq \bigwedge_{y\in M(x)} y$ for any $x\in X$.
\end{remark}

\begin{theorem}\label{theoremStepContractibilty}
Let $X$ be an atomic reduced lattice. The following conditions are equivalent:
\begin{enumerate}
\item $X\sim_n *$,
\item $\i(X)\sim_{n-1} *$,
\item $X_i\sim_{n-i} *$ for all $i\geq 0$,
\item $X_n = *$.
\end{enumerate}

With the convention that, for a negative number $m$, $X\sim_m*$ means that $X\sim_0 *$. Analogous equivalences hold when $X$ is a coatomic reduced lattice, with $\s(X)$ instead of $\i(X)$.
\end{theorem}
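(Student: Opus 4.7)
The plan is to reduce the four-way equivalence to a single key assertion: for an atomic reduced lattice $X$ and $n\geq 1$, one has $X\sim_n *$ if and only if $\i(X)\sim_{n-1}*$; dually, for a coatomic reduced lattice $X$ and $n\geq 1$, one has $X\sim_n *$ if and only if $\s(X)\sim_{n-1}*$. Granting these, $(1)\Leftrightarrow(2)$ is immediate. Since $\i$ sends an atomic reduced lattice to a coatomic one (its maximal elements coincide with $M(X)$ and every element of $\i(X)$ is, by construction, a meet of them) and dually $\s$ sends a coatomic one to an atomic one, the sequence $X_0,X_1,X_2,\ldots$ alternates between atomic and coatomic structures. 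Iterating the key equivalence and its dual then yields $(3)$, and since the sequence stabilizes once it reaches a singleton, the $i=n$ case gives $(4)$.

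To prove $X\sim_n * \Rightarrow \i(X)\sim_{n-1}*$, I would invoke Proposition \ref{propFenceAtomic} to arrange a fence
\[\Id_X=f_0\leq f_1\geq f_2\leq \cdots \, f_n=c_{x_0}.\]
Let $r\colon X\to \i(X)$ be the retraction $r(x)=\bigwedge M(x)$ and $i\colon \i(X)\hookrightarrow X$ the inclusion; recall $\Id_X\leq ir$ and $ri=\Id_{\i(X)}$. The decisive identity is $rf_1=r$: since $f_1\geq \Id_X$, every maximal element is fixed by $f_1$, so for any $x\in X$ and any $M\in M(x)$ one has $f_1(x)\leq f_1(M)=M$, whence $f_1(x)\leq \bigwedge M(x)=r(x)$; combined with $f_1\geq \Id_X$ this yields $rf_1=r$, and therefore $rf_1 i=\Id_{\i(X)}$. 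Composing the remaining terms of the fence with $r$ on the left and $i$ on the right then produces
\[\Id_{\i(X)}=rf_1 i\geq rf_2 i\leq \cdots \, rf_n i=c_{r(x_0)},\]
a fence of length $n-1$ in $\i(X)$, so $\i(X)\sim_{n-1}*$.

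For the converse, given a fence $\Id_{\i(X)}=g_0,g_1,\ldots,g_{n-1}=c_y$ realizing $\i(X)\sim_{n-1}*$, one obtains a fence of length $n$ in $X$ by prepending the comparison $\Id_X\leq ir=ig_0 r$ and continuing with $ig_1 r,\ldots,ig_{n-1}r=c_{i(y)}$; equivalently, since $\Id_X\leq ir$ shows $X\sim_1 \i(X)$, the conclusion is immediate from Remark \ref{remarkSteps}. The coatomic case is entirely dual, using $\s$ in place of $\i$, suprema of minimal elements in place of meets of maximal elements, and reversed inequalities throughout. The main obstacle is precisely the absorption identity $rf_1=r$, which allows the first step of the fence to be swallowed by the retraction and thus reduces the length from $n$ to $n-1$; its validity depends on the preparatory step (Proposition \ref{propFenceAtomic}) that arranges the fence to begin with $\Id_X\leq f_1$, which is where the atomic hypothesis genuinely enters the argument.
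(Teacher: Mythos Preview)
Your proof is correct and follows essentially the same approach as the paper: the crucial implication $(1)\Rightarrow(2)$ uses Proposition~\ref{propFenceAtomic} to arrange the fence to start with $\Id_X\leq f_1$, then the observation $f_1\leq ir$ (your ``absorption identity'' $rf_1=r$ is an equivalent formulation) to collapse the first step upon composing with $r$ and $i$. The only minor difference is in $(2)\Rightarrow(1)$: you argue directly via the fence $\Id_X\leq ir=ig_0r,\, ig_1r,\ldots$ (equivalently, $X\sim_1\i(X)$ together with Remark~\ref{remarkSteps}), whereas the paper invokes Theorem~\ref{theoremStepsChangeBeatPoints} and the fact that $\i(X)$ is obtained by removing only up beat points; both routes are valid and of comparable length.
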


\begin{proof}
 (1) $\Rightarrow$ (2). Let $r:X\to\i(X)$ be the retraction $r(x) = \bigwedge_{y\in M(x)} y$ and let $i:\i(X)\hookrightarrow X$ be the inclusion map. Then $ri = \Id_{\i(X)}$ and $ir\geq \Id_X$. If $X\sim_n * $, by Proposition \ref{propFenceAtomic} there exist $f_1,\ldots,f_n:X\to X$ such that
\[\Id_X\leq f_1\geq f_2\leq f_3\geq f_4\ldots\]
with $f_n$ a constant map. By the previous remark, $f_1\leq ir$, and therefore we have a fence
\[\Id_X\leq ir \geq f_2\leq f_3\geq f_4\ldots\]
Composing with $i$ and  $r$ we have:
\[\Id_{\i(X)} = ri \leq \Id_{\i(X)}=riri \geq rf_2i\leq rf_3i\geq rf_4i\leq \ldots\]
Hence $\i(X)\sim_{n-1}*$

(2) $\Rightarrow$  (1). Since $\i(X)$ is obtained from $X$ by removing only up beat points, and $\i(X)\sim_{n-1} *$, by Theorem \ref{theoremStepsChangeBeatPoints} $X\sim_n *$.

(1)  $\Leftrightarrow$ (3). This follows by applying induction, the arguments of above and the analogous results for coatomic reduced lattices.

(4)  $\Rightarrow$ (1). This follows from Theorem \ref{theoremStepsChangeBeatPoints} and the fact that each time that we apply $\i$ or $\s$, we perform a change of kind of beat points. 

(3) $\Rightarrow$ (4). Straightforward.
\end{proof}

\begin{remark}
If $X$ is atomic, then $X_n$ is coatomic for $n$ odd and it is atomic for  $n$ even. In particular, if $X\sim_n*$, by the previous theorem $X_n = *$, which means that $X_{n-1}$ has a maximum if $n$ is odd, or it has a minimum if $n$ is even. Thus if we let $\M_n$ to be $m(X_n)$ for $n$ even and $M(X_n)$ for $n$ odd, we conclude that $X\sim_n*$ if and only if $|\M_n| = 1$.
\end{remark}

Now we can apply these results to describe the contractibility in steps of $\A_p(G)$ in algebraic terms.

\begin{theorem}\label{ultimothm}
The poset $\A_p(G)$ is contractible in $n$ steps if and only if one of the following holds:
\begin{enumerate}
\item $n=0$ and $\A_p(G) = \{*\}$,
\item $n\geq 1$ is even and $\bigcap_{A\in \M_{n-1}} A >1$,
\item $n\geq 1$ is odd and $\gen{A:A\in \M_{n-1}}$ is abelian.
\end{enumerate}
\end{theorem}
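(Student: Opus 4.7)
The plan is to invoke Theorem \ref{theoremStepContractibilty} together with the remark following it, which jointly give the criterion $\A_p(G)\sim_n *$ if and only if $|\M_n|=1$. The case $n=0$ is immediate, and for $n\geq 1$ I would use that $X_n=\s(X_{n-1})$ preserves minima when $n$ is even, while $X_n=\i(X_{n-1})$ preserves maxima when $n$ is odd. Consequently $\M_n=m(X_{n-1})$ or $\M_n=M(X_{n-1})$ respectively, and the condition $|\M_n|=1$ becomes ``$X_{n-1}$ has a minimum'' (for $n$ even) or ``$X_{n-1}$ has a maximum'' (for $n$ odd). The remaining task is to translate each of these structural conditions into the claimed group-theoretic ones.

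When $n$ is even, $X_{n-1}=\i(X_{n-2})$ is coatomic with $M(X_{n-1})=\M_{n-1}$, so every element of $X_{n-1}$ is a meet of elements of $\M_{n-1}$. Hence a minimum exists if and only if $\bigwedge\M_{n-1}$ is a nontrivial element of $X_{n-1}$. Since meets in $\A_p(G)$ are group intersections, and since $X_{n-1}$ is by construction the poset of all nontrivial intersections of elements of $\M_{n-1}$, this meet lies in $X_{n-1}$ exactly when $\bigcap_{A\in\M_{n-1}}A>1$. This yields condition (2).

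When $n$ is odd, $X_{n-1}=\s(X_{n-2})$ is atomic with atoms $\M_{n-1}$, so a maximum exists if and only if $\M_{n-1}$ admits an upper bound in $X_{n-1}$. The easy direction of (3) is immediate: any such upper bound $M\in X_{n-1}\subseteq\A_p(G)$ is an elementary abelian $p$-group containing every $A\in\M_{n-1}$, forcing $\gen{A:A\in\M_{n-1}}\leq M$ to be abelian. The central difficulty, and what I expect to be the main obstacle, is the converse: assuming that $N:=\gen{A:A\in\M_{n-1}}$ is abelian (hence itself a $p$-torus), one must produce an element of $X_{n-1}$ above all of $\M_{n-1}$, and $N$ need not itself lie in $X_{n-1}$. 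My plan is to handle this by induction along the chain $\A_p(G)=X_0\supseteq X_1\supseteq\cdots\supseteq X_{n-1}$, building elements $U_k\in X_k$ with $U_k\geq N$: start from $U_0=N\in\A_p(G)$; when the next operation is $\i$, take $U_{k+1}=\bigcap\{T\in M(X_k):T\geq U_k\}$, which is a nontrivial intersection of maximal elements of $X_k$ and hence lies in $\i(X_k)=X_{k+1}$; when the next operation is $\s$, take $U_{k+1}$ to be a suitable join in $X_k$ of atoms of $X_k$ dominated by $U_k$, which lies in $\s(X_k)=X_{k+1}$. The abelian hypothesis on $N$ is what ensures the containment $U_{k+1}\geq N$ is preserved through both kinds of steps, so that the resulting $U_{n-1}\in X_{n-1}$ provides the required upper bound of $\M_{n-1}$, giving $X_{n-1}$ a maximum and hence condition (3).
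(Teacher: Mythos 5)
Your route is the same as the paper's: reduce via Theorem \ref{theoremStepContractibilty} and the remark after it to the statement that $X_{n-1}$ has a minimum ($n$ even) or a maximum ($n$ odd), where $X=\A_p(G)$, and then translate. Your even case and the easy direction of the odd case coincide with the paper's argument and are correct. For the converse of the odd case the paper simply asserts that the abelian hypothesis gives $X_{n-1}$ a maximum; you are right that this is the one point needing an actual argument, and propagating an upper bound along the chain $X_0\supseteq X_1\supseteq\cdots\supseteq X_{n-1}$ is a workable way to supply it.

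As written, however, your induction does not close at the $\s$-steps. A join in $X_k$ of atoms of $X_k$ dominated by $U_k$ is an element $\leq U_k$, and nothing forces it to contain $N$: the atoms of $X_k$ are not the elements of $\M_{n-1}$ (they change every time $\i$ is applied), so $N$ is not visibly generated by atoms of $X_k$ lying below $U_k$, and the bare appeal to ``the abelian hypothesis'' is not an argument. The repair is to choose the set of atoms correctly: at an $\s$-step put $S=\{a\in m(X_k):a\leq N\}$ and $U_{k+1}=\bigvee_{X_k} S$. This set is upper bounded in $X_k$ by $U_k$ (since $N\leq U_k$), so the join exists and lies in $\s(X_k)=X_{k+1}$; and $U_{k+1}\geq N$ because each $A\in\M_{n-1}$ lies in $X_{n-1}\subseteq X_{k+1}=\s(X_k)$, hence is itself a join in $X_k$ of a nonempty upper-bounded set of atoms of $X_k$, all of which are $\leq A\leq N$ and therefore belong to $S$; monotonicity of joins gives $A\leq U_{k+1}$ for every $A\in\M_{n-1}$ and hence $N=\gen{A:A\in\M_{n-1}}\leq U_{k+1}$. (The $\i$-steps are fine as you wrote them.) With that substitution the induction produces an upper bound for $\M_{n-1}$ in the atomic lattice $X_{n-1}$, so $X_{n-1}$ has a maximum and the proof is complete --- indeed more complete than the paper's, which leaves this step implicit.
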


\begin{proof}
By the previous remark $\A_p(G)\sim_n*$ if and only if $|\M_n|=1$. 

If $n$ is odd, $\A_p(G)_{n-1}$ has a maximum and $\M_{n-1}$ is the set of minimal elements of $\A_p(G)_{n-1}$. If $B\in\A_p(G)_{n-1}$ is the maximum,  $B\geq A$ for each $A\in \M_{n-1}$ and then $\gen{A:A\in \M_{n-1}}\leq B$ is an abelian subgroup.

If $n$ is even, $\A_p(G)_{n-1}$ has a minimum and $\M_{n-1}$ is the set of maximal elements of $\A_p(G)_{n-1}$. If $B\in\A_p(G)_{n-1}$ is the minimum,  $B\leq A$ for each $A\in \M_{n-1}$ and then $1<B\leq \bigcap_{A\in \M_{n-1}}A$ is a non-trivial subgroup. This proves the ``if'' part. 

For the ``only if'' part, note that in either case we have that $\A_p(G)_{n-1}$ has a maximum or a minimum, thus it is contractible in $1$ step and the result follows from Theorem \ref{theoremStepContractibilty}.
\end{proof}



\end{document}